\definecolor{darkgreen}{rgb}{0.0, 0.5, 0.0}
\newcommand{\eps}{\varepsilon}
\newcommand{\T}{\mathcal{T}}
\newcommand{\bT}{\mathbf{T}_{2n,g_n}}
\newcommand{\B}{B}
\newcommand{\C}{\mathbf{C}^{(n)}_{\vec{\ell}}(k_1,k_2)}
\newcommand{\interv}{\left[\frac{\theta}{4},\frac{\theta}{2}+\frac 1 4\right]}
\newcommand{\PP}{\mathbf{P}^{(n)}_\ell}
\newcommand{\ksep}{$(k_1,k_2)$-separating multicurve}
\newcommand{\iso}{\mathrm{Iso}_{\eps} \left( \mathbf{T}_{2n,g_n} \right)}
\newcommand{\isostar}{\mathrm{Iso}^*_{\eps} \left( \mathbf{T}_{2n,g_n} \right)}
\newcommand{\E}{\mathbb E}
\renewcommand{\P}{\mathbb P}
\newcommand{\diam}{\mathrm{diam}}
\theoremstyle{definition}
\newtheorem{thm}{Theorem}
\newtheorem{defn}{Definition}%[section]
\newtheorem{rem}[defn]{Remark}
\newtheorem{prop}[defn]{Proposition}
\newtheorem{corr}[defn]{Corollary}
\newtheorem{lem}[defn]{Lemma}
\newtheorem{conj}[defn]{Conjecture}
\title{\bf{Distances and isoperimetric inequalities in random triangulations of high genus}}
\author{Thomas \bsc{Budzinski}\footnote{CNRS and ENS Lyon, \url{thomas.budzinski@ens-lyon.fr}}, \, Guillaume \bsc{Chapuy}\footnote{CNRS and Université Paris Cité, \url{guillaume.chapuy@irif.fr}.} \, and Baptiste \bsc{Louf}\footnote{CNRS and Université de Bordeaux, \url{baptiste.louf@math.u-bordeaux.fr}}}
\begin{document}

\maketitle

\begin{abstract}
We prove that uniform random triangulations whose genus is proportional to their size $n$ have diameter of order $\log n$ with high probability. We also show that in such triangulations, the distances between most pairs of points differ by at most an additive constant.
	%most pairs of points have their distance in an interval of bounded length.
Our main tool to prove those results is an isoperimetric inequality of independent interest: any part of the triangulation whose size is large compared to $\log n$ has a perimeter proportional to its volume.

\end{abstract}

\section{Introduction and main results}

\medskip

\paragraph{Random planar and non-planar maps.} In this paper we study random maps on surfaces, in a regime in which both their size $n$ and their genus $g$ go to infinity. Here and later, a \emph{map} is a finite graph embedded on an oriented compact surface, considered up to homeomorphism -- maps can also be thought of as "discrete surfaces" made by gluing finitely many polygons by their sides. Different variants of maps can be considered by fixing the degrees of these polygons. In this paper, we will be interested in \emph{triangulations}, where all faces have degree three.

In the planar case ($g=0$, $n\rightarrow \infty$), random maps are very well understood both locally and globally. Locally, their behaviour is described by random infinite maps such as the Uniform Infinite Planar Triangulation (UIPT,~\cite{AS03}) and its variants~\cite{Kri05, CD06, Bud15}. Globally, it is known that their diameter grows as $n^{1/4}$~\cite{CS04}, and random maps rescaled by $n^{1/4}$ converge to the Brownian map~\cite{LG11, Mie11}, a result which holds for a rich variety of models (e.g.~\cite{Mar16}). Direct approaches to the continuum limit via Liouville quantum gravity have been independently developed~\cite{DKRV16, MS15b}.
The results for $g=0$ partially extend to the case of a fixed genus $g>0$ \cite{Chapuy:profile, BM22}. However, much remains to be done to understand the behaviour of the limiting objects when $g$ increases. The question is already difficult at the enumerative level, and it is deeply linked to the theory of enumeration of surfaces through the topological recursion~\cite{Eynard:book} and to the double scaling limit of matrix models, see e.g.~\cite{Chapuy:voronoi}.
Beyond the planar case, another extreme case which is well understood is the case where the genus is not constrained. This model is of a very different nature
%since the gluings between polygon become essentially independent, 
and is almost equivalent to the configuration model studied in random graph theory. The genus tends to concentrate very close to its maximum possible value and random maps only have a logarithmic number of vertices, see~\cite{Gam06, CP16, BCP19}.

\paragraph{The high genus regime.} A much more difficult task is to understand random maps when both $n$ and $g$ go to infinity, in particular in the \emph{high genus regime} where the genus $g$ is proportional to the size $n$. Since the Euler characteristic becomes strongly negative, this setting has long been suspected to result in hyperbolic behaviour. However, it is difficult to approach, in particular because no accurate enumerative estimates are known in this range.
Moreover, this regime is already nontrivial in the case of \emph{unicellular maps} (maps with a single face), for which it was proved that
the local limit is a supercritical random tree~\cite{ACCR13} and the diameter is logarithmic~\cite{Ray13a}.
More recently, another motivation for studying the high genus regime has come from the analogy between high genus random maps and random hyperbolic surfaces of genus $g$ under the Weil-Petersson measure when $g\rightarrow \infty$. These random surfaces are well understood. In particular, Mirzakhani famously proved in~\cite{Mir13} that they have diameter of order $\log g$, and no short separating geodesics in a precise sense. In the  unicellular case, several results or conjectures support this analogy~\cite{Ray13a, JansonLouf1, JansonLouf2, MP19}.
\medskip

Until recently, understanding high genus maps beyond the unicellular case was a wide open problem, see e.g.~\cite[Chapter 6]{B13book}. A first step in this direction was made in the paper~\cite{BL19} by two of the authors, where the \emph{local} behaviour of uniform triangulations in the high genus regime was proved to be described by the Planar Stochastic Hyperbolic Triangulations of~\cite{CurPSHIT}, thus proving a conjecture of Benjamini and Curien. Similar results were later obtained in the case of arbitrary (even) face degrees~\cite{BL20}.  

\paragraph{Global distances in high genus triangulations.} However, so far, the \emph{global} scale for random maps in the high genus regime beyond the unicellular case has been out of reach besides a lower bound result on the \emph{planarity radius}~\cite{Louf} (a discrete analogue of the injectivity radius). 
The main conjecture in this direction, attributable to several authors in the field, has been that the diameter is logarithmic. This is supported by analogy with the unicellular case or with random hyperbolic surfaces, and more recently by the hyperbolic nature of the local limit.

In this paper, we settle this conjecture by the affirmative. As in~\cite{BL19}, we work with triangulations. For the rest of the paper, we fix $0<\theta<\frac{1}{2}$, and let $(g_n)$ be a sequence such that $\frac{g_n}{n}\to \theta$. We let $\bT$ be a triangulation of genus $g_n$ with $2n$ faces chosen uniformly at random.
We write $\text{diam}(\bT)$ for the diameter of its underlying graph, i.e. the maximal graph distance between two of its vertices. We also say that an event holds \emph{with high probability} or \emph{w.h.p.} if it holds with probability tending to $1$ when $n$ tends to infinity.

\begin{thm}[Diameter]\label{thm_diameter}
There exist two constants $c_\theta, C_\theta>0$ depending only on $\theta$ such that
\[c_\theta\log n\leq \text{diam}(\bT)\leq C_\theta\log n\]
with high probability. 
\end{thm}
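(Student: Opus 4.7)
The two bounds in Theorem~\ref{thm_diameter} both rest on exponential volume growth of balls, at different scales, and I would treat them separately.

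\textbf{Lower bound.} The local limit result of \cite{BL19} identifies the rooted local limit of $\bT$ as the Planar Stochastic Hyperbolic Triangulation $\mathbb{T}_\lambda$ for an explicit $\lambda=\lambda(\theta)$. In $\mathbb{T}_\lambda$, balls of radius $r$ have expected volume of order $\Lambda^r$ for some $\Lambda=\Lambda(\theta)>1$. One can extend this to a uniform bound $\mathbb{E}[|B_r(\rho)|] \leq C\Lambda^r$ in $\bT$ for $r$ up to $\varepsilon\log n$ (by a peeling argument, or directly from the local-limit coupling of \cite{BL19}). Choosing $r = \tfrac{\log n}{2\log \Lambda}$ and applying Markov, $|B_r(\rho)| \leq n^{3/4}$ with high probability, while $|V(\bT)| \sim (1-2\theta)n$. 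Thus the root has a vertex at distance strictly more than $r$, and $\text{diam}(\bT) \geq \tfrac{\log n}{2\log\Lambda}$ w.h.p.

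\textbf{Upper bound.} The main input is the paper's isoperimetric inequality: w.h.p., every subset $S$ of $\bT$ with $K\log n \leq |S| \leq |V(\bT)|/2$ has perimeter at least $\alpha|S|$, for constants $\alpha,K>0$ depending on $\theta$. The plan is to show $d(u,v) = O(\log n)$ for every pair of vertices, by growing balls from each until they meet. Step~1 (\emph{seeding}): for every vertex $v$, there is a first radius $r_1(v)$ with $|B_{r_1(v)}(v)| \geq K\log n$, and $\max_v r_1(v) = O(\log n)$ w.h.p. This is where hyperbolicity of the local limit enters: a super-polynomial tail bound $\mathbb{P}(|B_r^{\mathbb{T}_\lambda}(\rho)| < K\log n) \leq n^{-2}$ for $r=O(\log\log n)$---obtainable from supercritical Galton--Watson arguments embedded in the PSHIT---combined with a local comparison with $\bT$ and a union bound over the $\sim n$ vertices, yields the seeding statement. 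Step~2 (\emph{bootstrap}): for $r \geq r_1(v)$ with $|B_r(v)| \leq |V(\bT)|/2$, the isoperimetric inequality applied to $B_r(v)$ gives $|B_{r+1}(v)| \geq (1+\alpha')|B_r(v)|$ for some $\alpha'>0$ (translating edge perimeter to a new-vertex count is routine in a triangulation). Iterating, $|B_{r_1(v)+j}(v)| \geq K\log n \cdot (1+\alpha')^j$, which exceeds $|V(\bT)|/2$ for $j = O(\log n)$. Step~3 (\emph{meeting}): with $R = \max_v r_1(v) + O(\log n) = O(\log n)$, every vertex $w$ satisfies $|B_R(w)| > |V(\bT)|/2$, so for any $u,v$ the balls $B_R(u)$ and $B_R(v)$ share a vertex, giving $d(u,v) \leq 2R$.

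\textbf{Main obstacle.} Step~1 is the principal difficulty: the isoperimetric inequality says nothing about sets of size $<K\log n$, so an independent input is needed to guarantee that balls reach this threshold at every vertex. Obtaining it \emph{uniformly} over the $\sim n$ vertices requires a super-polynomial tail on the volume of balls in the local limit, derived via branching-process arguments in the PSHIT. A secondary technical point is to transfer the iso-inequality (which likely bounds a map-theoretic perimeter of face sets) into a lower bound on $|B_{r+1}(v)|-|B_r(v)|$; but since balls in a triangulation have well-behaved boundaries, this conversion costs only a multiplicative constant. Once Step~1 is in place, the rest is a standard bootstrap.
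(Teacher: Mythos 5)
Your overall architecture for the upper bound (seed, bootstrap via isoperimetry, meeting of two large balls) is the right one and matches the paper's, but there are two concrete problems in the details, one of which causes you to insert an entire unnecessary probabilistic argument and one of which is a genuine gap in the step you treat as routine.

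First, Step~1 as you frame it is solving a non-problem. You do not need any tail bound on ball volumes in the PSHIT, nor a union bound over vertices, to guarantee that every ball reaches the threshold $K\log n$: it is a deterministic fact that if $B_r(v)\ne \bT$ then $|B_{r+1}(v)|\ge |B_r(v)|+1$ (Lemma~\ref{lem:ball_r+1} gives $|\partial B_r(v)|\ge 1$, hence at least one new face), so $|B_r(v)|\ge r$ for every $v$ and every $r$. Setting $r=K_\theta\log n$ immediately gives $|B_r(v)|\ge K_\theta\log n$. The paper uses exactly this one-line observation in place of your ``principal difficulty''. Your proposed GW/peeling argument is not wrong, but it is a detour through machinery the theorem does not require.

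Second, Step~2 uses the isoperimetric inequality in a form the paper does not prove. Theorem~\ref{thm_isoperimetric} concerns a multicurve that separates $\bT$ into \emph{two} face-connected components, each large; it is not a statement that every face set of size between $K\log n$ and $n$ has perimeter proportional to its size. When you cut out $B_r(v)$, the complement generally splits into many face-connected components, and some of those may have fewer than $K_\theta\log n$ faces, so the theorem says nothing about the edges separating them from $B_r(v)$. If most of the boundary of $B_r(v)$ is spent on a large number of tiny components, the naive bound $|\partial B_r(v)|\ge \alpha |B_r(v)|$ fails. The paper's Lemma~\ref{lem_ball_growth} handles this: it separates the complement components into large ones $\mathcal A$ (to which the isoperimetric inequality applies, component by component) and small ones $\mathcal B$, and argues in the dichotomy that either $|\mathcal A|$ dominates (yielding the perimeter bound), or $|\mathcal B|$ is comparable to $2n$, in which case $\mathcal B\subset B_{r+K_\theta\log n}(v)$ forces the ball to cover most of $\bT$ within an extra $K_\theta\log n$ steps. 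Your sketch needs this dichotomy to be correct; as written, the passage from the isoperimetric inequality to $|B_{r+1}(v)|\ge(1+\alpha')|B_r(v)|$ does not go through.

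On the lower bound, your route through local-limit volume estimates is genuinely different from the paper's (the paper does a first-moment count of self-avoiding paths of length $\ell$ between two uniformly random vertices, using $\mathbb E[\mathbf P^{(n)}_\ell]\lesssim n^{-1}\lambda(\theta)^{-\ell}$, which is elementary given the enumerative ratios from~\cite{BL19}). Your approach is plausible but requires extending the pointwise local convergence of~\cite{BL19} to a uniform volume bound at radii of order $\log n$, which is not immediate from that reference; the path-counting argument avoids this entirely and is the cleaner route given the tools already developed in the paper.
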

We refer to Section~\ref{sec:conjectures} for precise conjectures on optimal constants. We also note that it is natural to conjecture an analogous result for the corresponding problem for hyperbolic random surfaces. The natural analogue regime would be to consider Weil-Petersson random surfaces with $n$ cusps and genus $g$, with $g$ and $n$ going to infinity and $g/n$ going to a constant. As far as we know, this regime has not been considered yet and seems difficult. 

The proofs techniques behind Theorem~\ref{thm_diameter} also show the following fact, which roughly says that almost all pairs of points on the same random triangulation are almost at the same distance up to an additive constant. Again, a precise conjecture on the asymptotics of typical distances can be found in Section~\ref{sec:conjectures}.
\begin{thm}[Typical distances]\label{thm_dist}
Conditionally on $\bT$, let $(x_n, y_n, u_n, v_n)$ be four vertices picked uniformly at random in $\bT$, independently of each other. Then the sequence of random variables
\[d(x_n,y_n)-d(u_n,v_n)\]
	is tight, i.e. for all $\eta>0$, there is a constant $M=M_\theta(\eta)$ such that for $n$ large enough, we have
\[ \P \left( \left| d(x_n,y_n)-d(u_n,v_n) \right| \geq M \right) \leq \eta. \]
\end{thm}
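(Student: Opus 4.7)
The plan is to deduce the tightness from an \emph{exponential ball-growth} property that we expect the isoperimetric inequality to yield: there exist constants $c, c', K > 0$ depending only on $\theta$ such that, with high probability over $\bT$, every vertex $x$ satisfies
\begin{align*}
V_{r+1}(x) &\geq (1+c)\, V_r(x) &&\text{whenever } K\log n \leq V_r(x) \leq N/2, \\
N - V_{r+1}(x) &\leq (1-c')\bigl(N - V_r(x)\bigr) &&\text{whenever } N/2 \leq V_r(x) \leq N - K\log n,
\end{align*}
where $V_r(x) := |B_r(x)|$ and $N$ denotes the total number of vertices of $\bT$ (of order $n$ by Euler's formula). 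The first bound is the isoperimetric inequality applied to $B_r(x)$, the second to its complement.

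Granted this, I set $R(x) := \min\{r : V_r(x) \geq N/2\}$. Iterating the two bounds outwards from $R(x)$ (any term that drops below $K\log n$ is absorbed into a trivial bound since $n$ is large) yields, for every fixed $k$,
\[
V_{R(x)-k}(x) \leq N\,(1+c)^{-k}, \qquad N - V_{R(x)+k}(x) \leq \tfrac{N}{2}\,(1-c')^{k}.
\]
Since conditionally on $(\bT, x)$ a uniform vertex $y$ satisfies $\P(d(x,y) \leq r) = V_r(x)/N$, this translates into geometric tails of $d(x,y)$ around the random center $R(x)$:
\[
\P\bigl( |d(x,y) - R(x)| \geq k \,\big|\, \bT, x \bigr) \;\leq\; (1+c)^{-k} + \tfrac12 (1-c')^{k}.
\]

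Given $\eta > 0$, I pick $M = M(\eta)$ so that the right-hand side at $k = M$ is $\leq \eta/4$. The key observation is a symmetrization: since $(x,y)|\bT$ and $(y,x)|\bT$ have the same distribution and $d(x,y) = d(y,x)$, the bound above also holds with $R(y)$ in place of $R(x)$; intersecting the two events gives
\[
\P\bigl( |R(x) - R(y)| \leq 2M \,\big|\, \bT \bigr) \;\geq\; 1 - \eta/2
\]
for every typical $\bT$. The pair $(x, u)$ has the same distribution as $(x, y)$, so the same bound holds for $|R(x) - R(u)|$. Combining with $|d(x,y) - R(x)| \leq M$ and $|d(u,v) - R(u)| \leq M$ through the triangle inequality yields $\P(|d(x,y) - d(u,v)| \leq 4M) \geq 1 - \eta$, which is the claimed tightness.

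The main obstacle is the first step: extracting exponential \emph{vertex}-growth of balls from the isoperimetric inequality, which \emph{a priori} only bounds the edge-boundary. A single high-degree vertex in $\bT$ could absorb many boundary edges without enlarging the ball, so one must either control the maximum relevant degree, use a vertex-boundary version of the isoperimetric inequality, or sum the inequality over several successive shells to obtain growth ``on average''. Once this is in place, the rest of the argument is a short manipulation; in particular, the symmetrization in the third paragraph elegantly side-steps the need for any direct concentration statement for the random center $R(x)$.
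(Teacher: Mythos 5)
The overall architecture — use isoperimetry to show that distances to a fixed $x$ concentrate around a random radius, then compare — is in the same spirit as the paper's proof, and the symmetrization around the half-mass radius $R(x)$ is a clean device. But the central claim, the two exponential growth/decay inequalities, is \emph{not} a direct consequence of Theorem~\ref{thm_isoperimetric}, and the obstacle is not the one you identify at the end.

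Theorem~\ref{thm_isoperimetric} controls multicurves that split $\bT$ into exactly two face-connected components, each of size at least $K_\theta\log n$. The boundary $\partial B_r(x)$ is not such a multicurve in general: the complement $\bT\setminus B_r(x)$ typically has \emph{many} face-connected components, and the isoperimetric inequality says nothing at all about those of size smaller than $K_\theta\log n$. This is not a technicality: the paper proves (Lemma~\ref{lemma:largeladder}) that $\bT$ contains "tentacles" of logarithmic length bounded by a single digon, and many more short ones (Lemma~\ref{lemma:counttentacles}). A ball can therefore have $\Omega(n/\log n)$ small components hanging off it, each contributing only $O(1)$ to $|\partial B_r(x)|$, so one cannot get $|\partial B_r(x)|\gtrsim |B_r(x)|$ or $\gtrsim 2n-|B_r(x)|$ from Theorem~\ref{thm_isoperimetric} alone. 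This is exactly why the paper introduces a separate first-moment estimate, Proposition~\ref{prop_isolated_points}, bounding the total number of $\eps$-isolated faces by $\eta n$. With that extra input one shows that when both $|B_r(x)|$ and $2n-|B_r(x)|$ are at least $2\eta n$, the boundary is at least $\eps\eta n$ — a bound linear in $n$ and valid only in the macroscopic window, not an exponential-growth bound valid all the way down to $K\log n$. The paper then needs only $O(1/\eps\eta)$ steps to cross this window, which is all the tightness argument requires. So the correct fix to your plan is: replace "isoperimetric inequality" by "Proposition~\ref{prop_isolated_points}", and restrict the growth/decay claim to the window where both the ball and its complement have at least $c\eta n$ faces (choosing the isolation parameter $\eta$ much smaller than the target $\eta_0$ so that the window is wide enough to absorb $M(\eta_0)$ steps).

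The obstacle you flag — high-degree vertices absorbing many boundary edges without growing the ball — is not the real issue and is in fact painless here: the paper's balls and boundaries are measured in \emph{faces} and \emph{edges}, not vertices, and Lemma~\ref{lem:ball_r+1} ($|B_{r+1}|\geq|B_r|+\tfrac13|\partial B_r|$) converts boundary length into face growth without any degree control. The translation from face counts to vertex counts at the very end is done with the crude bound $\#\mathrm{vertices}\leq 3\cdot\#\mathrm{faces}$, which is all one needs. Finally, one small remark on the structure: the paper does not pass through a half-mass radius; it reduces to comparing $d(x_n,y_n)$ with $d(x_n,u_n)$ (shared basepoint) and shows that almost all vertices lie in an annulus of width $O(1)$ around $x_n$. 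Your symmetrization variant would also work once the growth estimate is repaired, so this part is a legitimate stylistic alternative.
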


\paragraph{Isoperimetric inequalities.} The proofs of the last two theorem rely on studying the growth of balls, in volume \emph{and} perimeter, around vertices of $\bT$. In order to do that, our main tool is an isoperimetric estimate saying that it is not possible to separate the surface into two components of at least logarithmic size by cutting along a small number of edges. We refer to Section~\ref{sec:prel} for a precise definition of a separating multicurve in a triangulation.
	
\begin{thm}[Isoperimetric inequality]\label{thm_isoperimetric}
There are constants $K_{\theta}, \delta_{\theta}>0$ depending only on $\theta$ such that with high probability, for all $K_{\theta}\log n\leq k_1\leq k_2$ and $k_1+k_2=2n$, the map $\bT$ does not contain a multicurve of total length $\ell\leq \delta_{\theta} k_1$ separating it into two connected components with respectively $k_1$ and $k_2$ triangles.
\end{thm}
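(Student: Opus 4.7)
The plan is a first-moment (union bound) argument. Write $S_{k_1,\ell}(\bT)$ for the number of multicurves in $\bT$ of total length $\ell$ separating the surface into two components of $k_1$ and $k_2 = 2n-k_1$ triangles. It suffices to prove that, for suitable constants $K_\theta,\delta_\theta > 0$,
$$\sum_{k_1 = \lceil K_\theta \log n \rceil}^{n}\ \sum_{\ell=1}^{\lfloor \delta_\theta k_1\rfloor}\ \E\bigl[S_{k_1,\ell}(\bT)\bigr]\ \longrightarrow\ 0 \quad \text{as } n\to\infty;$$
the claim then follows from Markov's inequality (the constraint $k_1\leq k_2$ forces $k_1\leq n$).

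To estimate the expectation, I would use the standard cut-and-paste correspondence. A triangulation of genus $g_n$ with $2n$ triangles carrying a marked separating multicurve of length $\ell$ with $c$ components, splitting into a piece of genus $g_1$ with $k_1$ triangles and a piece of genus $g_2$ with $k_2$ triangles (with $g_1 + g_2 + c - 1 = g_n$ by Euler's formula), is equivalent to a pair of bordered triangulations with matching boundary-component lengths. Writing $T_{k,g}(\vec\ell)$ for the number of bordered triangulations of genus $g$ with $k$ triangles and boundary-component lengths given by a composition $\vec\ell$ of $\ell$ with $c$ parts, and $T_{k,g}$ for the boundaryless count, this gives
$$\E[S_{k_1,\ell}(\bT)]\ =\ \frac{1}{T_{2n,g_n}}\sum_{c,\,g_1,\,g_2,\,\vec\ell}\,G(\vec\ell,c)\,T_{k_1,g_1}(\vec\ell)\,T_{k_2,g_2}(\vec\ell),$$
where $G(\vec\ell,c)$ is a polynomial gluing factor (products of boundary-component lengths and symmetry corrections).

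The bulk of the work is to bound this sum using enumerative estimates for high-genus triangulations. Two ingredients are required. First, a \emph{cost per boundary edge} inequality of the form $T_{k,g}(\vec\ell) \leq C_\theta^{\,\ell} \cdot \mathrm{poly}(k,\ell) \cdot T_{k,g}$, reflecting that each boundary edge contributes at most a bounded multiplicative cost relative to the boundaryless count. Second, an \emph{entropic inequality}
$$\frac{T_{k_1,g_1}\,T_{k_2,g_2}}{T_{2n,g_n}}\ \leq\ e^{-\alpha_\theta k_1}\cdot\mathrm{poly}(n),$$
uniform over all topological splittings $(g_1,g_2,c)$ compatible with Euler's formula and with the high-genus regime of rate $\theta$. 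Both ingredients should be derivable from the enumerative machinery developed in the high-genus analysis of~\cite{BL19} (based on Tutte/Goulden--Jackson type recursions and careful generating-function analysis). Choosing $\delta_\theta$ so that $\delta_\theta \log C_\theta < \alpha_\theta$, each summand is bounded by $e^{-(\alpha_\theta - \delta_\theta \log C_\theta)\,k_1}$ up to polynomial factors; summing over $k_1 \geq K_\theta \log n$ yields a bound of order $n^{-c_\theta K_\theta}$ for some $c_\theta > 0$, which is $o(1)$ once $K_\theta$ is chosen large enough.

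The main obstacle is the second (entropic) ingredient, which must hold \emph{uniformly} across all topological splittings $(g_1,g_2,c)$: since no closed-form asymptotics for $T_{n,g}$ are known in the high-genus regime, the estimate must rely on ratio inequalities for map counts, and in particular the purely exponential rates of the two sides combine neatly with that of the whole, so the gain $e^{-\alpha_\theta k_1}$ must come from subexponential prefactors or from the effect of the cut itself. A secondary difficulty is to control the sums over $c$ and over short compositions $\vec\ell$: one must ensure that many short, nearly-parallel curves cannot conspire to cheat the cost-per-boundary-edge accounting, which may require a sharper enumerative input than the naive outline above.
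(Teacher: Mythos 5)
Your high-level plan — first moment bound on separating multicurves, cut-and-paste into two bordered triangulations, convert bordered counts to closed counts, then a ratio estimate for the closed counts — is exactly the route the paper takes. But the proposal leaves precisely the substantive step as a black box, and this is where the paper's real work lies. You correctly flag the "entropic inequality" $\frac{T_{k_1,g_1}T_{k_2,g_2}}{T_{2n,g_n}}\leq e^{-\alpha_\theta k_1}\cdot\mathrm{poly}(n)$ (uniform over topological splittings) as the main obstacle, but you do not resolve it, and your remark that "the purely exponential rates of the two sides combine neatly with that of the whole" is exactly the point that is \emph{not} obvious and needs a new analytic input. In the asymptotic $\tau(n,g)=n^{2g}\exp(f(g/n)\,n+o(n))$ from \cite{BL19}, comparing the exponential parts requires knowing the sign of $n_1 f(h_1/n_1)+n_2 f(h_2/n_2)-n f(g_n/n)$, and this hinges on the \emph{concavity} of the entropy function $f$. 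That concavity is a genuine new fact proved in the paper (Lemma~\ref{lem:concave}, attributed to Elvey-Price), requiring an explicit computation with the parametrization of $\lambda(\theta)$. Without it the splitting could in principle be entropically \emph{favorable} and the whole approach would fail. Similarly, your instinct that the exponential gain $e^{-\alpha_\theta k_1}$ "must come from subexponential prefactors" is right — it comes from $(n_2/n)^{2g_n}$ with $n_2<n$ — but making this work in the critical regime $k_1 = o(n)$ requires a ratio estimate much sharper than the $e^{o(n)}$ error that Lemma~\ref{lem_asympto} alone gives; this is Proposition~\ref{prop_asympto_ratio}, which upgrades the error to $e^{O(h)+o(m)}$ and is a new result proved via Goulden--Jackson plus the ratio lemma. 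Your "secondary difficulty" about short compositions conspiring is, by contrast, not a real obstacle: the number of compositions of $\ell$ is $2^{\ell-1}$, and once the per-composition bound beats $2^{-\ell}$ (which the choice of $\delta_\theta$ ensures) the sum over compositions is handled immediately. So the shape of your argument is right, but the two key lemmas it relies on are not "standard enumerative machinery" as claimed — they are the technical core of the paper, and omitting them leaves a genuine gap.
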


The idea behind the proof of Theorem~\ref{thm_isoperimetric} is very natural: we will establish a first moment bound on the number of short separating multicurves in $\bT$. For this, we will rely mostly on the coarse enumerative estimates obtained by two of the authors in~\cite{BL19}.
We note that an analogue of Theorem~\ref{thm_isoperimetric} for the high genus Weil-Petersson measure has been proved by Mirzakhani~\cite[Thm.~4.4]{Mir13}, and that the global structure of our proof is quite similar to~\cite{Mir13}. However, the asymptotic estimates that we can rely on are not as precise as in~\cite{Mir13}, and the regime we work with is more intricate as the two parameters $n$ and $g$ grow at the same time.

Our isoperimetric inequality controls the existence of bottlenecks in the graph $\bT$ above the scale $\log n$. We will also prove that this is in some sense "optimal": contrary to the hyperbolic surfaces of~\cite{Mir13}, there exist logarithmic "tentacles" (i.e. subgraphs bounded by only two edges) attached to the graph (see Section~\ref{sec:tentacles}). Overall, this gives us a rough control on the Cheeger constant of $\bT$. We recall that the Cheeger constant of a graph $G$ with vertex set $V$ is defined as
\[ h(G)=\min_{\substack{V_1 \subset V\\ |V_1|\leq|V|/2}} \frac{|\partial_{\mathrm{Ver}} V_1|}{|V_1|},\]
where $|V_1|$ denotes the cardinal of $V_1$ and $|\partial_{\mathrm{Ver}} V_1|$ the number of edges with exactly one endpoint inside $V_1$.

\begin{thm}[Cheeger constant]\label{thm_cheeger}
    There are constants $c'_\theta>c_\theta> 0$ depending only on $\theta$ such that with high probability, the Cheeger constant $h(\bT)$ of $\bT$ satisfies
	\begin{align}\label{eq:thmChegger}
		\frac{c_\theta}{\log n} \leq h(\bT) \leq \frac{c'_\theta}{\log n}.
	\end{align}
\end{thm}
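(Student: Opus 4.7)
The plan is to treat the two bounds separately: the upper bound $h(\bT)\leq c'_\theta/\log n$ is a direct consequence of the existence of tentacles, while the lower bound $h(\bT)\geq c_\theta/\log n$ comes from Theorem~\ref{thm_isoperimetric} after translating the vertex boundary of a candidate set into an honest separating multicurve on the surface.

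For the upper bound, I would invoke the tentacle construction of Section~\ref{sec:tentacles}: with high probability there is a vertex subset $V_1$ with $|V_1|\geq c(\theta)\log n$ whose induced subsurface of $\bT$ is bounded by a multicurve of length $2$, so that $|\partial_{\mathrm{Ver}} V_1|\leq 2$ and (for $n$ large) $|V_1|\leq |V|/2$; this immediately gives $h(\bT)\leq 2/|V_1|\leq c'_\theta/\log n$.

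For the lower bound, fix a vertex subset of $\bT$ with $k:=|V_1|\leq |V|/2$, let $\ell:=|\partial_{\mathrm{Ver}} V_1|$ and $V_2:=V\setminus V_1$. The key preliminary step is to partition the $2n$ triangles of $\bT$ into those with all three vertices in $V_1$ (set $F_1$), all three in $V_2$ (set $F_2$), and mixed ($F_m$). A double count of pairs (edge of $\partial_{\mathrm{Ver}} V_1$, adjacent triangle) shows $|F_m|=\ell$, and the topological boundary of $F_1$ viewed as a subsurface of $\bT$ is a multicurve $\gamma_1$ of length at most $\ell$ separating $F_1$ from its complement $F_1^c$. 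A corner count at vertices of $V_1$, using minimum degree $3$, gives $3|F_1|+2M_1+M_2=\sum_{v\in V_1}\deg(v)\geq 3k$, where $M_1$ (resp. $M_2$) counts mixed triangles with $2$ (resp. $1$) vertices in $V_1$; since $M_1+M_2=\ell$, this yields $|F_1|\geq k-\tfrac{2}{3}\ell$, and symmetrically $|F_2|\geq |V_2|-\tfrac{2}{3}\ell\geq k-\tfrac{2}{3}\ell$.

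To conclude, split on the size of $k$. If $k\leq K\log n$ with $K$ a suitable multiple of $K_\theta$, the $2$-edge-connectivity of a triangulation (removing a single edge cannot disconnect, as the two adjacent triangles provide alternative paths) forces $\ell\geq 2$ and thus $\ell/k\geq 2/(K\log n)$. If $k>K\log n$, one may assume $\ell\leq k/10$ (otherwise $\ell/k\geq 1/10$ is already much larger than the target), and then both $|F_1|$ and $|F_2|$ are of order $k$, hence above the threshold $K_\theta\log n$. Applying Theorem~\ref{thm_isoperimetric} to $\gamma_1$ separating $F_1$ from $F_1^c$ --- in whichever component is smaller, whose triangle count is at least $14k/15$ --- yields $\ell\geq |\gamma_1|>\delta_\theta\cdot 14k/15$, a constant ratio, much stronger than required. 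The step I expect to be most delicate is precisely this translation between the vertex datum $\partial_{\mathrm{Ver}} V_1$ and a separating surface multicurve: one needs both the length of $\gamma_1$ and the enclosed number of faces to stay comparable to $\ell$ and $k$ respectively, so as to be in a regime where Theorem~\ref{thm_isoperimetric} yields a useful bound; once this bookkeeping is done, the isoperimetric inequality handles the bulk regime $k\gg \log n$ and the minimum-degree estimate handles the small regime $k\lesssim\log n$.
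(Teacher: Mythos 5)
Your overall strategy is the same as the paper's: the upper bound comes from the existence of logarithmic tentacles (Lemma~\ref{lemma:largeladder}), and the lower bound comes from Theorem~\ref{thm_isoperimetric} after turning the vertex boundary of a candidate set into a face-level separating multicurve. The upper bound part is fine. However, the lower bound as written has two genuine gaps.

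First, the corner count $3|F_1|+2M_1+M_2=\sum_{v\in V_1}\deg(v)$ is only valid if every triangle visits each of its vertices exactly once; the maps here allow loops and multiple edges, so a triangle may repeat a vertex, and then the identity overcounts on the left. More importantly, you invoke $\sum_{v\in V_1}\deg(v)\geq 3k$, i.e.\ minimum degree $\geq 3$, which is not guaranteed for $\bT$ (degree-$2$ vertices are combinatorially possible in a triangulation with multiple edges) and would need its own probabilistic estimate. The paper avoids both issues by working with $F_1^i$ = faces having \emph{at least two} vertices in $V_1^i$, where $V_1^i$ ranges over the vertex-connected components of $V_1$, and by the elementary spanning-tree bound: a connected set of $p\geq 2$ vertices carries at least $p-1\geq p/2$ edges, each of which lies in two faces that automatically belong to $F_1^i$, yielding $|F_1^i|\geq |V_1^i|/6$ with no degree hypothesis and no corner count. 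This choice also makes $\partial F_1^i\subseteq\partial_{\mathrm{Ver}} V_1^i$ immediate (an edge with one side in $F_1^i$ and one side outside must have exactly one endpoint in $V_1^i$), whereas your $\partial F_1$ consists of edges with both endpoints in $V_1$, and the bound $|\partial F_1|\leq\ell$ requires an extra argument as you note.

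Second — and this is the gap you flag yourself as ``most delicate'' but do not actually close — you apply Theorem~\ref{thm_isoperimetric} to $\gamma_1=\partial F_1$ without verifying that $\gamma_1$ separates $\bT$ into \emph{exactly two} face-connected components, as the statement of that theorem requires. If $F_1$ (or its complement) is face-disconnected, the theorem does not directly apply. Worse, it is possible for $|F_1|$ to be large while every face-connected component of $F_1$ is smaller than $K_\theta\log n$, in which case Theorem~\ref{thm_isoperimetric} gives nothing for any of them and your dichotomy collapses. The paper's fix is to decompose first $V_1$ into vertex-connected components $V_1^i$, then each $F_1^i$ into face-connected components $F_1^{i,j}$, and to observe that for each piece either $|F_1^{i,j}|\leq K_\theta\log n$, in which case the trivial bound $|\partial F_1^{i,j}|\geq 1$ already gives $|\partial F_1^{i,j}|\geq K_\theta^{-1}|F_1^{i,j}|/\log n$, or Theorem~\ref{thm_isoperimetric} applies and gives $|\partial F_1^{i,j}|\geq\delta_\theta|F_1^{i,j}|$. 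Summing over $j$ and then over $i$ (the sets $\partial_{\mathrm{Ver}} V_1^i$ are pairwise disjoint subsets of $\partial_{\mathrm{Ver}} V_1$ because the $V_1^i$ are the connected components) gives the claimed lower bound uniformly, without the case split on $k$.
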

In particular, by the Cheeger inequalities (see e.g.~\cite[Chapter 2]{Ch97}), with high probability the spectral gap of the Laplacian matrix of $\bT$ is $O \left( \frac{1}{\log n} \right)$.
We note that an analogous result for the Weil-Petersson measure was obtained in~\cite{ShenWu}, namely that the spectral gap goes to $0$ if the number $n$ of cusps is $o(g)$ but much larger than $\sqrt{g}$, where $g$ is the genus. It is conjectured in~\cite{ShenWu} that it is still the case if $\frac{n}{g}$ goes to a positive constant, which is also supported by Theorem~\ref{thm_cheeger}.

\paragraph{Structure and main steps of the paper.}
In Section~\ref{sec:prel} we give precise definitions for triangulations and separating multicurves, and we obtain enumerative estimates for ratios of numbers of triangulations in the high genus regime. We also study the concavity of the limiting function governing these estimates, which plays a crucial role in the proof. In Section~\ref{sec:iso} we use all these tools to prove Theorem~\ref{thm_isoperimetric}. In Section~\ref{sec:dist} we study carefully the perimeter and volume growth of balls to deduce Theorems~\ref{thm_diameter} and~\ref{thm_dist} from Theorem~\ref{thm_isoperimetric}. In Section~\ref{sec:tentacles} we study the local "tentacles" of $\bT$ and prove Theorem~\ref{thm_cheeger}. Finally, in section~\ref{sec:conjectures} we state some precise conjectures in the optimal constants describing typical distances and the diameter, and prove that those two constants are not the same if they exist.

\section{Preliminaries}
\label{sec:prel}

\subsection{Definitions}
\label{subsec:def}

A \emph{map} $m$ is a finite graph (with loops and multiple edges allowed) embedded on a compact connected oriented surface, considered up to homeomorphism.
The connected components of the complement of the graph on the surface are called the \emph{faces} of $m$. One may equivalently think of a map as a connected oriented surface made by the side-by-side identification of edges in a finite family of polygons (each polygon becomes a face of the map). The \emph{genus} of a map is the genus of its underlying surface.
The maps that we consider will always be \emph{rooted}, i.e. equipped with a distinguished oriented edge called the \emph{root edge}. The face to the right of the root edge is the \emph{root face}, and the vertex at the start of the root edge is the \emph{root vertex}.

The \emph{degree} of a face in a map is the number of edge-sides incident to it. Note that the two edge-sides of the same edge can be incident to the same face, in which case this edge contributes twice to the degree.
A \emph{triangulation} is a rooted map where all the faces have degree $3$. For every $n \geq 1$ and $g \geq 0$, we will denote by $\T(2n,g)$ the set of triangulations of genus $g$ with $2n$ faces (the number of faces must be even so that the edges can be glued two by two). By the Euler formula, a triangulation in $\T(2n,g)$ has $3n$ edges and $n+2-2g$ vertices. In particular, the set $\T(2n,g)$ is nonempty if and only if $n \geq 2g-1$. 
We will also denote by $\mathbf{T}_{2n,g}$ a uniform random variable on $\T(2n,g)$ and, in accordance with the literature, we will denote by $\tau(n,g)$ the cardinality of $\T(2n,g)$.

A \emph{simple cycle} of length $\ell \geq 1$ in a triangulation $t$ is a finite sequence of oriented edges $(\vec{e}_i)_{0 \leq i \leq \ell}$ of $t$ with $\vec{e}_0=\vec{e}_{\ell}$ such that the starting points of the edges $\vec{e}_i$ are pairwise distinct, and for all $1 \leq i \leq \ell$, the starting point of $\vec{e}_i$ is also the endpoint of $\vec{e}_{i-1}$.

A \emph{multicurve} in a triangulation $t$ is an ordered list of simple cycles $(c_1, \dots, c_s)$ satisfying the following properties:
\begin{itemize}
    \item the cycles $c_i$ are edge-disjoint, i.e. no edge appears in two cycles, even with different orientations;
    \item no two cycles cross each other, i.e. for any vertex $v$ and any four pairwise distinct edges $e_1, e_2, e_3, e_4$ incident to $v$ in this cyclic order, there are no two cycles $c_i, c_j$ such that $c_i$ uses $e_1, e_3$ and $c_j$ uses $e_2, e_4$. See Figure~\ref{fig:multicurve}.
\end{itemize}

Furthermore, we say that the multicurve $(c_1,\dots,c_s)$ is \emph{$(k_1,k_2)$-separating} with $k_1+k_2=2n$ if the complement $t \backslash \left( c_1 \cup \dots \cup c_s \right)$ has exactly two face-connected components with respectively $k_1$ and $k_2$ faces.
Note that in our definitions we require each cycle to be simple, but we allow different cycles in a multicurve to share a vertex. The reason for this is that multicurves of interest to us will be boundaries of balls, in which case the cycles may not be vertex-disjoint, see Figure~\ref{fig:multicurve}.
We will denote by $\vec{\ell}=(\ell_1,\ell_2,\dots,\ell_s)$ the list of lengths of the cycles composing a separating multicurve, and write $|\vec{\ell}|=\ell_1+\ell_2+\dots+\ell_s$.

\begin{figure}
	\centering
\includegraphics[width=0.12\linewidth]{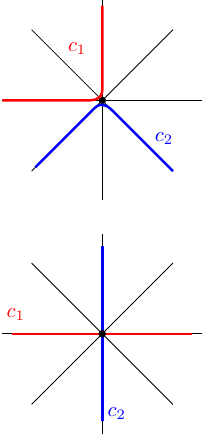}
\hfill
	\includegraphics[width=0.85\linewidth]{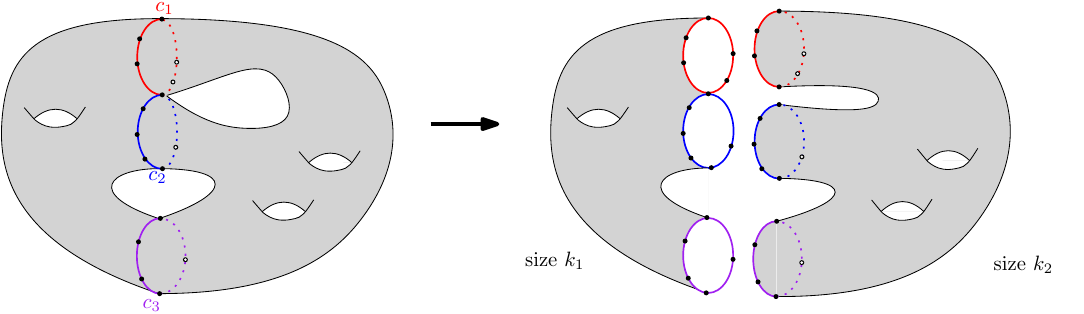}
	\caption{
		Left: relative position of two simple cycles sharing a vertex. The top case is allowed in the definition of a multicurve, the bottom one is not. Right:
		Pictorial view of a $(k_1,k_2)$-separating multicurve $(c_1,c_2,c_3)$ on a triangulation $t$.  The two face-connected components of $t\backslash (c_1\cup c_2\cup c_3)$ have sizes $k_1$ and $k_2$. Note that on this example $c_1$ and $c_2$ share a vertex, which becomes two different vertices in one of the components. The edges and vertices of $t$ which do not appear on the $c_i$ are not represented. }
	\label{fig:multicurve}
\end{figure}

We finally define triangulations of multipolygons, which will roughly be the parts of a triangulation separated by a multicurve. Let $s \geq 0$ and let $\vec{\ell}=(\ell_1,\ell_2,\dots,\ell_s)$ be a length vector (i.e. a finite sequence of positive integers). A \emph{triangulation of the $\vec{\ell}$-gon} is a (non-rooted) map $t$ with $s$ distinguished oriented edges $\vec{e}_1, \dots, \vec{e}_s$ such that:
\begin{itemize}
    \item for all $1 \leq i \leq s$, the face $f_i$ incident to $\vec{e}_i$ on its left has degree $\ell_i$;
    \item the faces $f_i$ are pairwise distinct and have simple, edge-disjoint boundaries (i.e. no vertex appears twice on the boundary of the same face $f_i$, and no edge is incident to two of the faces $f_i$);
    \item all the other faces of $t$ have degree $3$.
\end{itemize}
To prevent any confusion, we insist that in this paper a triangulation of the $\vec{\ell}$-gon is not necessarily planar.
For $s=0$ and $\vec{\ell}=\emptyset$, a triangulation of the $\vec{\ell}$-gon is just a triangulation. The faces $f_i$ are called the \emph{external faces} of $t$, whereas the others are called \emph{internal faces} of $t$. In particular, the union of the boundaries of the external faces form a multicurve. Note that the most usual definition in the literature requires the stronger condition that the boundaries of the $f_i$ are vertex-disjoint. Again, the reason why we relax this condition is that we want to consider a metric ball as a triangulation of a multi-polygon, and the cycles on its boundary may not be vertex-disjoint.

We denote by $\T_{\vec{\ell}}(k,g)$ the set of triangulations of the $\vec{\ell}$-gon of genus $g$ with $k$ internal triangles. If $t\in\T_{\vec{\ell}}(k,g)$, we denote by $\partial t$ the multicurve bounding its external faces, and we write $|t|:=k$ and $|\partial t|:=|\vec{\ell}|$.

We also recall the definition of the \emph{graph distance} in a triangulation $t$. For a pair $(v,v')$ of vertices of $t$, the distance $d_t(v,v')$ is the length of the shortest path of edges of $t$ from $v$ to $v'$. For $r \geq 1$, the ball $B_r(v)$ of radius $r$ and center $v$ is the triangulation of a multipolygon whose internal faces are exactly the faces of $t$ which are incident to at least one vertex $v'$ such that $d_t(v,v') \leq r-1$. We will also denote by $|B_r(v)|$ the number of internal faces of the ball $B_r(v)$ and by $|\partial B_r(v)|$ the sum of the boundary lengths of its external faces. See Figure~\ref{fig:ball}. We conclude with an easy deterministic lemma which will allow us to turn isoperimetric inequalities into distance estimates.

\begin{lem}\label{lem:ball_r+1}
    Let $t$ be a finite triangulation, let $v$ be a vertex of $t$ and let $r \geq 1$ be such that $B_r(v) \ne t$. Then we have
    \begin{equation}\label{eqn:ball_expansion}
    |B_{r+1}(v)| \geq |B_r(v)| + \frac{1}{3} |\partial B_r(v)|.
    \end{equation}
\end{lem}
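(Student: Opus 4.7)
The plan is to exploit the fact that any vertex incident to a boundary edge of $B_r(v)$ must lie at distance exactly $r$ from $v$, which forces the triangle of $t$ on the external side of such an edge to join the ball at the next step. The factor $1/3$ will then come from the fact that a triangle has three sides.

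First I would establish the following structural claim: every endpoint of an edge on $\partial B_r(v)$ is at graph distance exactly $r$ from $v$. For the lower bound, any boundary edge $e$ is adjacent to an external face of $B_r(v)$, i.e.\ (locally) to a triangle $T$ of $t$ that is not an internal face of $B_r(v)$. By definition of $B_r(v)$, all three vertices of $T$ lie at distance $\geq r$ from $v$, so in particular the two endpoints of $e$ do. For the upper bound, $e$ is also adjacent to an internal face $T'$ of $B_r(v)$, whose three vertices include (by the definition of the ball) at least one vertex $w$ with $d_t(v,w) \leq r-1$; since the endpoints of $e$ must be at distance $\geq r$, the vertex $w$ is the third vertex of $T'$, and it is joined by an edge to each endpoint of $e$, forcing those endpoints to be at distance $\leq r$.

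Next I would use this to identify new triangles in $B_{r+1}(v)$. For each edge $e \in \partial B_r(v)$, let $T(e)$ denote the triangle of $t$ lying on the external side of $e$. By the previous paragraph, $T(e)$ has (at least) two vertices at distance exactly $r$, so $T(e)$ is incident to a vertex at distance $\leq r$; hence $T(e)$ is an internal face of $B_{r+1}(v)$. On the other hand, $T(e)$ is not an internal face of $B_r(v)$, because its three vertices all have distance $\geq r$ from $v$. Therefore $T(e) \in B_{r+1}(v) \setminus B_r(v)$ for every $e \in \partial B_r(v)$.

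Finally, the map $e \mapsto T(e)$ from $\partial B_r(v)$ to the set of new triangles is at most $3$-to-$1$, because a triangle has only $3$ sides and hence can be the image of at most three boundary edges. This gives
\[
|B_{r+1}(v)| - |B_r(v)| \;\geq\; |\{ T(e) : e \in \partial B_r(v)\}| \;\geq\; \frac{|\partial B_r(v)|}{3},
\]
which is \eqref{eqn:ball_expansion}. The only delicate point is the structural claim on the distance of boundary vertices, and in particular treating the definition of the ball as a triangulation of a multipolygon carefully, so that the ``external side'' of a boundary edge $e$ unambiguously refers to the unique triangle of $t$ sharing the edge $e$ with an internal face of $B_r(v)$. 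The hypothesis $B_r(v) \neq t$ only serves to ensure $|\partial B_r(v)| \geq 1$ so that the statement is nonvacuous; once this is granted, the inequality is purely local and combinatorial.
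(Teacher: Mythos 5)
Your proof is correct and uses the same idea as the paper: assign to each boundary edge $e$ the triangle of $t$ on its external side, observe that it lies in $B_{r+1}(v)\setminus B_r(v)$, and then use that a triangle has three sides to get the $1/3$ factor. The paper's proof is a two-line version of this and omits the verification that the external triangle indeed enters $B_{r+1}(v)$; your structural claim (boundary vertices are at distance exactly $r$) supplies exactly that missing detail, so you have simply written out what the paper leaves implicit.
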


\begin{proof}
    For each edge $e$ on the boundary of $B_r(v)$, the face $f_r(e)$ of $t$ which is incident to $e$ but is not an internal face of $B_r(v)$ belongs to $B_{r+1}(v) \backslash B_r(v)$. Moreover, each triangular face has $3$ sides, so $e \to f_r(e)$ is at most $3$-to-$1$ and~\eqref{eqn:ball_expansion} follows.
\end{proof}

\begin{figure}
	\centering
	\includegraphics[width=0.45\linewidth]{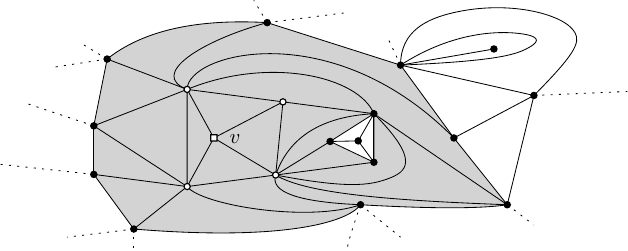}:
	\caption{The ball of radius $r=2$ around a vertex $v$ in a triangulation (in gray). Vertices at distance at most one from $v$ are in white. Note that the boundary of the ball is not connected. We have $|B_r(v)|=20$ and $|\partial B_r(v)|=9+3=12$.}
	\label{fig:ball}
\end{figure}

\subsection{Asymptotic enumeration of high genus triangulations}

Our proofs require good estimates on the numbers of triangulations counted by size and genus. Although an explicit recurrence formula due to Goulden and Jackson~\cite{GJ08} entirely determines these numbers, it does not provide a direct insight into their asymptotic behaviour in the high genus regime. The only asymptotic estimates come from~\cite{BL19}, but they are not written precisely enough for our purposes here. In this section, we recall these estimates and refine them.

We first recall these estimates from~\cite{BL19}. Let $\lambda_c=\frac{1}{12\sqrt{3}}$. For any $\lambda \in (0,\lambda_c]$, let $h \in \left( 0,\frac{1}{4} \right]$ be such that $\lambda=\frac{h}{(1+8h)^{3/2}}$, and let
\begin{equation}
d(\lambda)=\frac{  h \log \frac { 1 + \sqrt { 1 - 4 h } } { 1 - \sqrt { 1 - 4 h } } } { ( 1 + 8 h ) \sqrt { 1 - 4 h } }.
\end{equation}
It was checked in~\cite{BL19} that the function $d(\lambda)$ is increasing with $\lim_{\lambda \to 0} d(\lambda)=0$ and $d(\lambda_c)=\frac{1}{6}$.
For any $\theta \in \left[ 0,\frac{1}{2} \right)$, we denote by $\lambda(\theta)$ the unique solution of the equation
\begin{equation}\label{eqn_lambda_vs_theta}
d(\lambda)=\frac{1-2\theta}{6}.
\end{equation}
In particular, the function $\lambda(\theta)$ is analytic on $\left( 0,\frac{1}{2} \right)$, positive and decreasing on $\left[ 0, \frac{1}{2} \right)$. The next two results were obtained in~\cite{BL19} as a consequence of local convergence results for $\bT$, we restate them here as lemmas.

\begin{lem}[{\cite[Lemma 26]{BL19}}]\label{lem_ratio}
Let $(g_n)_{n \geq 1}$ be a sequence such that $0 \leq g_n \leq \frac{n+1}{2}$ for all $n$ and such that $\frac{g_n}{n}\rightarrow \theta$ for some $\theta \in \left[ 0, \frac{1}{2} \right)$.
Then 
\[\frac{\tau(n-1,g_n)}{\tau(n,g_n)} \xrightarrow[n \to +\infty]{} \lambda(\theta).\]
\end{lem}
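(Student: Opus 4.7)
The plan is to derive the ratio convergence from the local limit theorem of~\cite{BL19}, which asserts that $\bT$ converges in distribution for the Benjamini--Schramm topology to the Planar Stochastic Hyperbolic Triangulation (PSHIT) $\TT_{\lambda(\theta)}$ of~\cite{CurPSHIT}. The strategy is to express the ratio $\tau(n-1,g_n)/\tau(n,g_n)$, up to an explicit combinatorial factor, as the probability of a simple local event around the root of $\bT$, and then to read off its limit from the explicit peeling law of the PSHIT.

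Concretely, I would introduce a local ``size-$2$ deletion'' event $\mathcal{E}$ near the root: for example, that the two triangles adjacent to the root edge form a quadrangle with four pairwise distinct vertices, and that none of these vertices are joined by any additional edge outside this quadrangle. Under $\mathcal{E}$ the two triangles can be deleted from $t$ and the resulting quadrangular hole closed canonically (say by identifying the two pairs of opposite boundary edges in a fixed way), producing a triangulation in $\T(2n-2,g_n)$. Keeping track of rerooting choices on the smaller triangulation, and using Euler's formula to express the vertex count, yields an equality of the form
\[ \P\bigl(\mathcal{E}\text{ holds in }\bT\bigr)\;=\;F(n,g_n)\,\frac{\tau(n-1,g_n)}{\tau(n,g_n)}, \]
where $F(n,g_n)$ is an explicit rational combinatorial factor converging to a positive constant $F(\theta)$.

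Since $\mathbf{1}_{\mathcal{E}}$ depends only on the ball of radius $2$ around the root, it is continuous for the local topology, and local convergence gives that the left-hand side tends to $\P(\mathcal{E}\text{ in }\TT_{\lambda(\theta)})$. The PSHIT is defined through an explicit peeling law parametrised by $\lambda$, so this limiting probability can be written as a concrete rational function of $\lambda(\theta)$. Dividing by $F(\theta)$ yields the existence of $\lim_n \tau(n-1,g_n)/\tau(n,g_n)$, and matching the two expressions identifies the limit as $\lambda(\theta)$.

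The main obstacle is the combinatorial bookkeeping of the deletion/reconstruction bijection: one must exclude degenerate local configurations (repeated vertices, extra parallel edges, self-identifications) carefully enough that the deletion map is a genuine bijection onto a well-defined subclass of $\T(2n-2,g_n)$, and that the reverse gluing operation does not create spurious loops or double edges. A secondary but necessary point is the explicit computation of $F(\theta)$ and the verification that it is strictly positive; this is handled by elementary estimates on the vertex count $n+2-2g_n\sim(1-2\theta)n$, which stays linear in $n$ because $\theta<1/2$.
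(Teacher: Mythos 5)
The paper does not prove this lemma itself: it is imported verbatim as \cite[Lemma~26]{BL19}, so there is no in-paper argument to compare against. Your proposal is thus an attempt at an independent proof, and it has two concrete problems.

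First, the surgery you describe does not preserve the genus. After removing the two root triangles (and the root edge), you are left with a quadrangular hole with cyclic boundary $a\to b\to c\to d\to a$. Closing this hole by ``identifying the two pairs of opposite boundary edges'' is exactly the classical quadrangle gluing: with orientation-reversing identifications $ab\sim dc$, $bc\sim ad$ all four corners collapse to one vertex and you have glued in a handle, so the result lands in $\T(2n-2,g_n+1)$, not $\T(2n-2,g_n)$; with orientation-preserving identifications the resulting closed surface has odd Euler characteristic and is non-orientable, which is not allowed here. In either case the alleged map into $\T(2n-2,g_n)$ is wrong, and the resulting ``identity'' would relate $\P(\mathcal{E})$ to $\tau(n-1,g_n+1)/\tau(n,g_n)$, a quantity that grows like $n^2$ rather than converging to $\lambda(\theta)$. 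A genus-preserving closure must instead identify \emph{adjacent} boundary edges sharing a vertex (fold the quadrangle along a diagonal, identifying the two third vertices of the two root triangles), or equivalently one can use the cleaner ``degree-$2$ root vertex contraction'' bijection, which gives exactly $\P(\text{root edge starts at a degree-}2\text{ vertex})=\tau(n-1,g_n)/\tau(n,g_n)$ with no correction factor $F$ at all.

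Second, even with a correct bijection, the decisive step is left undone. You need to verify that $\P_{\TT_{\lambda(\theta)}}(\mathcal{E})/F(\theta)$ equals $\lambda(\theta)$, where $\lambda(\theta)$ is \emph{defined} in this paper by the mean-degree equation $d(\lambda)=(1-2\theta)/6$. That verification is precisely the spatial Markov property of the PSHIT expressed for the local event $\mathcal{E}$, and it is the content of the lemma, not a formality; saying ``matching the two expressions identifies the limit'' asserts the conclusion rather than proving it. More structurally, in~\cite{BL19} the ratio estimate and the local convergence theorem are established together: the ratio $\tau(n-1,g_n)/\tau(n,g_n)$ is what pins down which PSHIT parameter $\lambda$ appears as subsequential limit. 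Deducing the ratio lemma from the already-identified local limit therefore presupposes the very identification it is meant to prove; it is not formally circular if one insists on treating the local convergence theorem as a black box, but it does not constitute an independent derivation of the estimate.
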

Note that this lemma implies that one can write 
$\frac{\tau(n-1,g)}{\tau(n,g)}=\lambda(g/n)+o(1)$ as $n \to +\infty$, where the $o(1)$ is uniform on $g/n \in \left[0, \left( \frac{1}{2}-\eps \right) \right]$, and we will use it under this form\footnote{Indeed, if it was not the case, there would be sequences $(n_k,g_k)$ along which the ratio estimate fails, and we could extract a subsequence along which $g_k/n_k$ converges to $\theta \in \left[ 0,\frac{1}{2}-\eps \right]$, contradicting Lemma~\ref{lem_ratio}.}.

\begin{lem}[{\cite[Thm. 3]{BL19}}]\label{lem_asympto}
Let $(g_n)$ be a sequence such that $0 \leq g_n \leq \frac{n+1}{2}$ for all $n$ and $\frac{g_n}{n} \to \theta \in \left[0, \frac{1}{2} \right]$. Then we have
\[ \tau(n,g_n)=n^{2g_n} \exp \left( f(\theta) n + o(n) \right) \]
as $n \to +\infty$, where $f$ is a continuous function given by $f(0)=\log 12\sqrt{3}$, by $f(1/2)=\log \frac{6}{e}$ and by
\begin{equation}\label{eq_f}
f(\theta)= 2 \theta\log \frac{12\theta}{e} + \theta\int_{2}^{1/\theta} \log \frac{1}{\lambda(1/t)}\mathrm{d}t
\end{equation}
for $0<\theta<\frac{1}{2}$.
\end{lem}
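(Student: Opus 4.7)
The plan is to derive this lemma from the ratio estimate (Lemma~\ref{lem_ratio}) by a telescoping argument and a Riemann sum approximation. Write $g = g_n$ for brevity, fix a small $\eps > 0$, and set $n_0 = \lceil (2+\eps) g \rceil$, chosen so that $g/n_0$ stays bounded away from $1/2$ uniformly in $n$. Telescoping gives
\[
\log \tau(n,g) - \log \tau(n_0, g) = -\sum_{k=n_0+1}^{n} \log \frac{\tau(k-1, g)}{\tau(k, g)}.
\]
For $n_0 < k \leq n$ the quantity $g/k$ lies in a compact subinterval of $[0, 1/2)$, so the uniform version of Lemma~\ref{lem_ratio} (justified in its footnote) yields $\log(\tau(k-1,g)/\tau(k,g)) = \log \lambda(g/k) + o(1)$ uniformly in $k$, and the cumulative error is $o(n)$.

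Next, the sum $\sum_{k=n_0+1}^{n} \log \lambda(g/k)$ is a Riemann sum for $\int_{n_0}^{n} \log \lambda(g/x) \, dx$; since $\log \lambda$ is smooth on the relevant compact interval, the step-size error is $O(1) = o(n)$. Substituting $t = x/g$ and using $n/g \to 1/\theta$, $n_0/g \to 2+\eps$, I obtain
\[
\log \tau(n, g) = \log \tau(n_0, g) + n \theta \int_{2+\eps}^{1/\theta} \log \frac{1}{\lambda(1/t)} \, dt + o(n).
\]
A direct calculation verifies that the function $f$ defined in the statement satisfies the consistency relation $\theta f'(\theta) - f(\theta) - 2\theta = \log \lambda(\theta)$, which makes the identity above compatible with the target formula; the $-2\theta$ piece is precisely what produces the prefactor $n^{2g_n}$ when one writes $\log \tau(n, g) = 2g \log n + f(g/n) n + o(n)$.

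It remains to pin down the base value $\tau(n_0, g)$ and take $\eps \to 0$. Iterating the telescoping down to the minimal value $k = 2g-1$ reduces everything to the asymptotics of $\tau(2g-1, g)$, which counts one-vertex triangulations of genus $g$ with $4g-2$ faces and is given by an explicit closed-form expression via the Harer--Zagier or Goulden--Jackson recursion. A Stirling-type expansion of this count fixes the boundary constants $f(0) = \log 12\sqrt{3}$ and $f(1/2) = \log(6/e)$. The main obstacle is the boundary regime $\theta \to 1/2$: the integrand $\log(1/\lambda(1/t))$ is unbounded at $t = 2$ since $\lambda(1/2) = 0$, but one can show from the definition of $\lambda$ (by expanding around $h = 0$) that the singularity is only logarithmic, so the improper integral is finite and $\int_2^{2+\eps} \log(1/\lambda(1/t)) \, dt \to 0$ as $\eps \to 0$, closing the argument.
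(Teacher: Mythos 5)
The paper does not actually prove this lemma — it cites it as \cite[Thm.~3]{BL19}, remarking only that it "was deduced from Lemma~\ref{lem_ratio}" there. Your overall plan (telescope the log-ratios, pass to a Riemann sum, check that the proposed $f$ satisfies the first-order consistency relation $\theta f'(\theta)-f(\theta)-2\theta=\log\lambda(\theta)$, then fix the integration constant via a base case) is therefore the same route as the source, and the pieces you carry out explicitly — the telescoping, the Riemann-sum error estimate, the verification of the consistency relation — are correct.

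However, there is a genuine gap in how you pin down the additive constant. The uniform version of Lemma~\ref{lem_ratio} only controls $\log\tau(k-1,g)-\log\tau(k,g)+\log(1/\lambda(g/k))$ when $g/k$ stays in a compact subset of $[0,1/2)$, which is why you introduce $n_0=\lceil(2+\eps)g\rceil$. But you then propose to continue telescoping from $n_0$ down to $k=2g-1$ in order to reach the explicit one-vertex count $\tau(2g-1,g)$. That range $k\in[2g-1,(2+\eps)g]$ is exactly where the ratio estimate is \emph{not} uniform: each individual ratio converges to $\lambda(g/k)$, but you must sum $\Theta(\eps g)$ of them without any control on the rate, and the rate visibly degrades as $g/k\to 1/2$. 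Noting that $\int_2^{2+\eps}\log(1/\lambda(1/t))\,dt\to 0$ disposes only of the main (integral) part; the error part of the sum on $[2g-1,(2+\eps)g]$ requires a separate a priori two-sided bound on $\tau(k,g)/\tau(k-1,g)$ in that window (e.g.\ from the Goulden--Jackson recurrence), which you do not supply. The same missing uniform control is needed for the cases $\theta=1/2$ (an arbitrary sequence with $g_n/n\to1/2$, not just $n=2g-1$) and especially $\theta=0$ (where, if $g_n\to\infty$ slowly, the telescoping spans essentially the full range $g/k\in(0,1/2)$). Finally, the explicit formula for $\tau(2g-1,g)$ can only yield the constant $f(1/2)=\log(6/e)$; the value $f(0)=\log 12\sqrt3$ must come from elsewhere — either from the planar asymptotics $\tau(n,0)\sim c\,n^{-5/2}(12\sqrt3)^n$ or from the $\theta\to0^+$ limit of formula~\eqref{eq_f} (using $\lambda(1/t)\to\lambda_c$ as $t\to\infty$) — not from the one-vertex count as your last paragraph suggests.
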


We highlight that the estimate holds on the whole range $\left[ 0,\frac{1}{2} \right]$ of $\theta$. For the same reason as Lemma~\ref{lem_ratio}, this result can also be read as $\tau(n,g)=n^{2g} \exp \left( f \left( \frac{g}{n} \right) n + o(n) \right)$, where the $o(n)$ is uniform in $0 \leq g \leq \frac{n+1}{2}$.

In~\cite{BL19}, Lemma~\ref{lem_asympto} was deduced from Lemma~\ref{lem_ratio}. In particular, when we want to estimate a ratio of the form $\frac{\tau(n',g')}{\tau(n,g)}$, it provides an estimate up to a factor $e^{o(n)}$. The next result, whose proof also relies on Lemma~\ref{lem_ratio}, roughly means that the factor $e^{o(n)}$ can be replaced by $e^{O(g-g')+o(n-n')}$, which will be much better in the case where $(n',g')$ is close to $(n,g)$.

\begin{prop}\label{prop_asympto_ratio}
For all $\theta \in \left( 0,\frac{1}{2} \right)$, there is a constant $a_{\theta} \in (0,1)$ such that the following holds. Let $(g_n)$ be a sequence such that $0 \leq g_n \leq \frac{n+1}{2}$ for every $n$ and $\frac{g_n}{n} \to \theta$. For all integers $m$ and $h$ satisfying $0\leq m\leq \frac{n}{2}$ and $0\leq h\leq \min \left( \frac{g_n}{2}, \frac{m+1}{2} \right)$, we have
\[
\frac{\tau(n,g_n)}{\tau(n-m,g_n-h)}\geq a_{\theta}^{h}\frac{n^{2g_n}}{(n-m)^{2(g_n-h)}}\exp \left( f\left(\frac{g_n}{n}\right) n -f\left(\frac{g_n-h}{n-m}\right)(n-m)+ o(m) \right),
\]
where the $o(m)$ is uniform in $(m,h)$ as $n \to +\infty$ (that is, it is bounded by $m\eps(n)$ with $\eps(n) \to 0$ as $n \to +\infty$).
\end{prop}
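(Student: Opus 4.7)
My plan is to express $\tau(n,g_n)/\tau(n-m,g_n-h)$ as a telescoping product along a carefully chosen path $(n_i, g_i)_{i=0}^{m+h}$ in the $(n,g)$-lattice from $(n, g_n)$ to $(n-m, g_n-h)$. The path will consist of $m$ ``$n$-steps'' (each of the form $(n_i, g_i) \to (n_i-1, g_i)$) and $h$ ``$g$-steps'' (each of the form $(n_i, g_i) \to (n_i, g_i-1)$), interleaved so that each intermediate $(n_i, g_i)$ stays close to the linear interpolation between the endpoints. The hypotheses $h \le (m+1)/2$, $m \le n/2$, and $g_n \le (n+1)/2$ are exactly what is needed to ensure that $g_i/n_i$ stays uniformly bounded away from $1/2$ along the entire path, so that Lemma~\ref{lem_ratio} applies with a uniform $o(1)$ error at each $n$-step.

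At each $n$-step at $(n', g')$, Lemma~\ref{lem_ratio} gives $\log[\tau(n', g')/\tau(n'-1, g')] = -\log \lambda(g'/n') + o(1)$. At each $g$-step at $(n', g')$, no analogous asymptotic is available, so I would prove an auxiliary combinatorial lower bound
\[
\tau(n', g') \geq b_\theta\, (n')^2\, \tau(n', g'-1)
\]
for some constant $b_\theta > 0$ depending only on $\theta$. This bound can be obtained by a handle-attaching surgery: given $t \in \T(2n', g'-1)$, one constructs of order $(n')^2$ elements of $\T(2n', g')$ (each arising at most a bounded number of times) by selecting two triangles in $t$ and regluing their edges so as to attach a handle.

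Summing over the $m + h$ steps yields
\[
\log \frac{\tau(n, g_n)}{\tau(n-m, g_n - h)} \;\geq\; -\sum_{n\text{-steps}} \log \lambda(g_i/n_i) \;+\; 2 \sum_{g\text{-steps}} \log n_i \;+\; h \log b_\theta \;+\; o(m),
\]
which must be matched with the desired expression. For this, I introduce $F(n, g) := n f(g/n) + 2g \log n$ (the leading approximation from Lemma~\ref{lem_asympto}). Differentiating~\eqref{eq_f} yields the identities $\partial_n F = -\log \lambda(g/n)$ (equivalent to $-\log \lambda(\theta) = f(\theta) - \theta f'(\theta) + 2\theta$) and $\partial_g F = f'(g/n) + 2 \log n$. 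Since the total variation of $F$ along a path is path-independent, summing step-wise increments of $F$ along our path shows that
\[
-\sum_{n\text{-steps}} \log \lambda(g_i/n_i) + 2 \sum_{g\text{-steps}} \log n_i \;=\; F(n, g_n) - F(n-m, g_n-h) - h f'(\theta) + o(m),
\]
up to Riemann-sum errors controlled by the $O(1/n)$ increments of $g_i/n_i$ along the path. Setting $a_\theta := b_\theta\, e^{-f'(\theta)}$ (which lies in $(0, 1)$ for $b_\theta$ chosen small enough) then yields the proposition.

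\textbf{Main obstacle.} The central difficulty is proving the combinatorial bound $\tau(n', g') \ge b_\theta (n')^2 \tau(n', g'-1)$ with a uniform positive constant $b_\theta$, via an explicit surgery-and-multiplicity count. A secondary challenge is scheduling the interleaved path so that every intermediate $(n_i, g_i)$ satisfies $g_i \le (n_i + 1)/2$ and $g_i/n_i \in [0, 1/2 - \eps]$ for some fixed $\eps > 0$; the constraint $h \le (m+1)/2$ is exactly what permits such a schedule, and all Riemann-sum errors must be shown uniform in the pairs $(m, h)$ within the stated range.
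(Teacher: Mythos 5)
Your overall strategy --- telescoping $\tau(n,g_n)/\tau(n-m,g_n-h)$ along a lattice path, handling $n$-steps with Lemma~\ref{lem_ratio}, handling $g$-steps with a combinatorial lower bound, and matching the resulting sums against a potential $F(n,g)=nf(g/n)+2g\log n$ --- is the same skeleton as the paper's proof, and your identity $-\log\lambda(\theta)=f(\theta)-\theta f'(\theta)+2\theta$ is correct. However, two steps in your plan do not hold up as stated.

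First, the bound $\tau(n',g')\geq b_\theta\,(n')^2\,\tau(n',g'-1)$ is asserted, not proved, and a ``handle-attaching surgery'' is far from routine: cross-regluing the sides of two chosen edges can disconnect the surface or fail to raise the genus, and the multiplicity of the construction must be bounded --- none of this is automatic. You also do not need a new surgery argument: the Goulden--Jackson recurrence immediately gives $\tau(n,g)\geq n^2\,\tau(n-2,g-1)$ for $n\geq 2$, and combining it with Lemma~\ref{lem_ratio} (which converts $\tau(n-2,g-1)$ back into $\lambda^2(1+o(1))\,\tau(n,g-1)$) yields precisely the pure $g$-step inequality you want, uniformly for $g/n$ in a compact subinterval of $(0,1/2)$. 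That is exactly how the paper handles the genus steps, and it is strictly less work than inventing a surgery.

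Second, your claim that $\sum_{g\text{-steps}} f'(g_i/n_i)=hf'(\theta)+o(m)$ is false in the stated range: with $m$ as large as $n/2$ and $h$ as large as $g_n/2$, the ratios $g_i/n_i$ along the path range over an interval of width $\Theta(1)$, so the discrepancy $\sum_i\bigl(f'(g_i/n_i)-f'(\theta)\bigr)$ is genuinely $O(h)=O(m)$, not $o(m)$. The fix is to absorb that $O(h)$ term into the prefactor $a_\theta^h$ rather than into the $o(m)$, which is exactly what the paper does via~\eqref{eq_asympto_ratio_3}, bounding $\bigl|nf\bigl(\tfrac{g_n-h}{n}\bigr)-nf\bigl(\tfrac{g_n}{n}\bigr)\bigr|\leq h\max_{\interv}|f'|$ and folding it into $a_\theta$. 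Relatedly, the paper first lowers $g$ by $h$ at fixed $n$, and only then lowers $n$ by $m$ at fixed genus; this sequential path keeps the bookkeeping of where $g_i/n_i$ can wander much more transparent than your interleaved schedule, which buys you nothing here and makes the uniformity argument harder.
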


\begin{proof}
The Goulden--Jackson formula~\cite{GJ08} reads
\begin{multline*}
(n+1)\tau(n,g)= 4n(3n-2)(3n-4)\tau(n-2,g-1)+4(3n-1)\tau(n-1,g)\\
+4\sum_{\substack{i+j=n-2\\i,j\geq 0}}\sum_{\substack{g_1+g_2=g\\g_1,g_2\geq 0}}  (3i+2)(3j+2)\tau(i,g_1)\tau(j,g_2)+2\mathbbm{1}_{n=g=1}.
\end{multline*}
Hence, very crudely
\[\tau(n,g)\geq n^2\tau(n-2,g-1)\]
for all $n,g$ with $n \geq 2$.
Hence, by Lemma~\ref{lem_ratio} and using the monotonicity of $\lambda$, for $0 \leq i \leq h-1$, we have 
\[\tau(n,g_n-i)\geq (1+o(1)){n^2}{\lambda\left(\frac{g_n}{n}\right)^{2}}\tau(n,g_n-i-1),\]
where the $o(1)$ is uniform in $i$.
Therefore, using $\frac{g_n}{n} \to \theta<\frac{1}{2}$, we get
\begin{equation}\label{eq_asympto_ratio_1}
\frac{\tau(n,g_n)}{\tau(n,g_n-h)}\geq \left( n \lambda \left( \frac{g_n}{n} \right)\right)^{2h}e^{o(h)} \geq \left( n \lambda \left( \frac{\theta}{2}+\frac{1}{4} \right)\right)^{2h}e^{o(h)},
\end{equation}
where $o(h)$ is uniform in $h$ as $n \to +\infty$.

On the other hand, by Lemma~\ref{lem_ratio}, we have
\begin{align*}
\frac{\tau(n,g_n-h)}{\tau(n-m,g_n-h)}&=\frac{e^{o(m)}}{\prod_{i=0}^{m-1}\lambda\left(\frac{g_n-h}{n-i}\right)}\\
&=\exp\left[(g_n-h)\int^{n/(g_n-h)}_{(n-m)/(g_n-h)}\log\frac{1}{\lambda(1/t)}dt+o(m)\right],
\end{align*}
with $o(m)$ uniform in $(m,h)$.
But by the definition~\eqref{eq_f} of the function $f$, we also have
\begin{align*}
nf\left(\frac{g_n-h}{n}\right)-(n-m)f\left(\frac{g_n-h}{n-m}\right)=(g_n-h)\left[2\log\left(\frac{n-m}{n}\right)+\int^{\frac{n}{g_n-h}}_{\frac{n-m}{g_n-h}}\log\frac{1}{\lambda(1/t)}dt\right],
\end{align*}
so we can write
\begin{align}\label{eq_asympto_ratio_2}
\frac{\tau(n,g_n-h)}{\tau(n-m,g_n-h)}&=\left(\frac{n}{n-m}\right)^{2(g_n-h)}\exp\left(nf\left(\frac{g_n-h}{n}\right)-(n-m)f\left(\frac{g_n-h}{n-m}\right)+o(m)\right).
\end{align}
Finally, since $h \leq \frac{g_n}{2}$, for $n$ large enough we have $\left[\frac{g_n-h}{n},\frac{g_n}{n}\right]\subset \interv$ so, since $f\in C^1 \left( \interv  \right)$, we can write
\begin{equation}\label{eq_asympto_ratio_3}
\left| nf\left(\frac{g_n-h}{n}\right)-nf\left(\frac{g_n}{n}\right)\right|\leq h \times \max_{\interv} |f'|.
\end{equation}
The lemma follows by writing
\[\frac{\tau(n,g_n)}{\tau(n-m,g_n-h)}=\frac{\tau(n,g_n)}{\tau(n,g_n-h)}\frac{\tau(n,g_n-h)}{\tau(n-m,g_n-h)}\]
and by combining equations~\eqref{eq_asympto_ratio_1},~\eqref{eq_asympto_ratio_2} and~\eqref{eq_asympto_ratio_3}. In particular, we can take \[a_{\theta}= \lambda \left( \frac{\theta}{2}+\frac{1}{4} \right)^2 \times \exp \left( -{\max_{\interv} |f'|} \right) \in (0,1).\]
\end{proof}

Finally, we recall~\cite[Lemma 1]{BL19}, which will allow us to transfer the above estimates to triangulations of multipolygons. The proof just consists of triangulating external faces.

\begin{lem}\label{lem_filling_boundaries}
Let $k \geq 1$ and $g \geq 0$. For any length vector $\vec{\ell}=\left( \ell_1, \dots, \ell_s \right)$, we have
\[|\mathcal{T}_{\vec{\ell}}(k,g)|\leq (3k+3|\vec{\ell}|)^{s-1}\tau\left(\frac{k+|\vec{\ell}|}2,g\right).\]
\end{lem}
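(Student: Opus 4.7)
\begin{idproof}
The plan is to construct an explicit injection from $\T_{\vec{\ell}}(k,g)$ into $\T(k+|\vec{\ell}|, g)$ times a tuple of $s-1$ oriented edges (the case $s=0$ gives an equality and is trivial, so I assume $s \geq 1$). The underlying idea, as the paper suggests, is simply to triangulate each external face by adding a central cone vertex, turning a triangulation of a multipolygon into a genuine triangulation of the same genus.

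Concretely, starting from $t \in \T_{\vec{\ell}}(k,g)$ with distinguished edges $\vec{e}_1, \ldots, \vec{e}_s$, for each external face $f_i$ of degree $\ell_i$ I add a new vertex $v_i$ in the interior of $f_i$ and connect it by a new edge to each of the $\ell_i$ boundary vertices (all distinct since the boundary of $f_i$ is simple). This subdivides $f_i$ into $\ell_i$ triangles, so the resulting map $\tilde{t}$ is a genuine triangulation of genus $g$ with $k+|\vec{\ell}|$ faces. Rooting $\tilde{t}$ at $\vec{e}_1$ (still an oriented edge of $\tilde{t}$) produces an element of $\T(k+|\vec{\ell}|, g)$, which contributes the factor $\tau((k+|\vec{\ell}|)/2, g)$. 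I then record the remaining distinguished edges $\vec{e}_2, \ldots, \vec{e}_s$ as a tuple of oriented edges of $\tilde{t}$; since $\tilde{t}$ has $3(k+|\vec{\ell}|)$ oriented edges, this extra data lives in a set of size $(3k+3|\vec{\ell}|)^{s-1}$.

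To verify injectivity, I would recover $t$ (with its distinguished edges and external faces) from the pair $(\tilde{t}, \vec{e}_2, \ldots, \vec{e}_s)$. The key observation is that each center vertex $v_i$ can be identified in $\tilde{t}$ as the third vertex of the triangle immediately to the left of $\vec{e}_i$; erasing $v_1, \ldots, v_s$ together with their incident edges restores the external faces $f_1, \ldots, f_s$ and hence $t$ itself. Since not every tuple in the image needs to come from a valid triangulation of the multipolygon, the map is an injection but not a bijection, which yields exactly the inequality in the lemma.

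The main subtlety to handle carefully is that this recovery procedure really is unambiguous. The required simplicity and pairwise edge-disjointness of the boundaries of the external faces in the definition of $\T_{\vec{\ell}}(k,g)$ are precisely what ensures that the vertices $v_i$ are genuinely new vertices of $\tilde{t}$, distinct from each other and from the vertices of $t$, and that the triangle to the left of $\vec{e}_i$ is always one of the $\ell_i$ triangles produced by coning over $f_i$. These verifications are elementary but need to be made explicit to justify the injectivity.
\end{idproof}
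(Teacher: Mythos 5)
Your proof is correct and takes exactly the approach the paper intends: the paper's one-line justification ("the proof just consists of triangulating external faces") is the coning construction you carry out, and you correctly supply the injectivity check that the paper leaves implicit. One small slip: the $s=0$ case is not "an equality and trivial" — the right-hand side would then read $(3k)^{-1}\tau(k/2,g)$, which is strictly smaller than $|\mathcal{T}_{\emptyset}(k,g)|=\tau(k/2,g)$ — so the stated bound actually fails for $s=0$; this is harmless since the lemma is only ever invoked with $s\geq 1$, but you should simply say the lemma implicitly assumes $s\geq 1$ rather than that $s=0$ holds trivially.
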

%Note that the left-hand side is $0$ by the Euler formula if $k+|\vec{\ell}|$ is odd.

\subsection{Concavity of the function $f$}

The following result will play a key role in this work.
We thank Andrew Elvey-Price for the proof.

\begin{lem}\label{lem:concave}
The function $f$ of Lemma~\ref{lem_asympto} is bounded, concave, and is $C^1$ on $\left( 0, \frac{1}{2} \right)$.
\end{lem}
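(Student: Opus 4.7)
Boundedness is immediate: by Lemma~\ref{lem_asympto} the function $f$ is continuous on the compact interval $[0, 1/2]$, hence bounded. For the $C^1$ regularity on $(0, 1/2)$, the plan is to rewrite~\eqref{eq_f} via the change of variables $u = 1/t$, giving
\[ f(\theta) = 2\theta \log \frac{12\theta}{e} + \theta \int_{\theta}^{1/2} \frac{\log(1/\lambda(u))}{u^2}\, du. \]
Since $\lambda$ is analytic and positive on $(0, 1/2)$, the integrand is smooth in $u$, so in fact $f \in C^{\infty}(0, 1/2)$.

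The concavity is the substantive part. Differentiating the above identity twice, the contribution of the moving upper limit cancels with one of the terms coming from the prefactor $\theta$, and one obtains the compact formula
\[ f''(\theta) = \frac{1}{\theta}\left( 2 + \frac{\lambda'(\theta)}{\lambda(\theta)} \right). \]
Thus the task reduces to the pointwise estimate $(\log \lambda)'(\theta) \leq -2$. Differentiating the defining relation $d(\lambda(\theta)) = (1-2\theta)/6$ yields $\lambda'(\theta) = -1/(3 d'(\lambda))$, so this becomes the one-variable inequality $6 \lambda\, d'(\lambda) \leq 1$ for $\lambda \in (0, \lambda_c)$.

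To verify this, I would switch to the parametrization $\lambda = h/(1+8h)^{3/2}$, $h \in (0, 1/4)$. A direct computation gives $d\lambda/dh = (1-4h)/(1+8h)^{5/2}$ and an explicit formula for $dd/dh$ as a combination of $L(h) := \log\frac{1+\sqrt{1-4h}}{1-\sqrt{1-4h}}$ and $\sqrt{1-4h}$. After clearing denominators and using the identity $1 - 2h + 16h^2 = 6h + (1-4h)^2$ (in particular $1 - 2h + 16h^2$ is always positive and can be cancelled from both sides), the target reduces to the closed-form inequality
\[ L(h) \leq \frac{(1+8h)\sqrt{1-4h}}{6h}. \]
Setting $\beta = \sqrt{1-4h} \in (0, 1)$, this becomes $3(1-\beta^2)\,\mathrm{atanh}(\beta) \leq \beta(3 - 2\beta^2)$. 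The difference $g(\beta) := \beta(3-2\beta^2) - 3(1-\beta^2)\mathrm{atanh}(\beta)$ vanishes at $\beta = 0$ and satisfies $g'(\beta) = 6\beta(\mathrm{atanh}(\beta) - \beta) > 0$ on $(0, 1)$ by the Taylor expansion $\mathrm{atanh}(\beta) = \beta + \beta^3/3 + \cdots$, which closes the argument.

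The main obstacle is this algebraic reduction to the elementary inequality $\mathrm{atanh}(\beta) \geq \beta$: one has to chain derivatives in three variables ($\theta$, $\lambda$, $h$) and spot the cancellation $6h + (1-4h)^2 = 1 - 2h + 16h^2$. A more conceptual proof via the Goulden--Jackson recursion, comparing $\tau(n, g)$ with products $\tau(n_1, g_1)\tau(n_2, g_2)$ at prescribed genus densities, would be desirable; however the naive attempt only yields a strictly weaker inequality, because the factor $n^{2g}$ in $\tau(n,g) = n^{2g} e^{f(g/n)n + o(n)}$ introduces an extra entropy-like term of the wrong sign.
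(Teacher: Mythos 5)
Your proof is correct and takes a route genuinely different from the paper's, even though both start from the same second derivative $f''(\theta) = \tfrac{1}{\theta}\bigl(2 + \tfrac{\lambda'(\theta)}{\lambda(\theta)}\bigr)$. The paper pushes through the change of variables to the fully explicit rational formula $f''(\theta) = -\tfrac{2(1+6h+128h^3)\sqrt{1-4h}}{3hD}$ with $D = (1-2h+16h^2)L(h) - (1+8h)\sqrt{1-4h}$, so the whole burden is the single inequality $D>0$, proved there via the Taylor lower bound $L(h) > 2\sqrt{1-4h} + \tfrac{2}{3}(1-4h)^{3/2}$. You instead stop at $f'' = \tfrac{1}{\theta}\bigl(2-\tfrac{1}{3\lambda d'}\bigr)$ and reduce to $6\lambda d'(\lambda)\le 1$; your algebraic simplification (including the cancellation of $1-2h+16h^2 = 6h+(1-4h)^2$) and the monotonicity argument $g'(\beta) = 6\beta(\mathrm{atanh}\,\beta-\beta)>0$ are all correct, and the final inequality you prove, $6hL(h) \leq (1+8h)\sqrt{1-4h}$, is an \emph{upper} bound on $L$ (equivalently $d(\lambda)<\tfrac16$), while the paper's $D>0$ is a \emph{lower} bound on $L$ (equivalently $d'(\lambda)>0$); these bound $L$ from opposite sides and are genuinely distinct statements. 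The one point you should make explicit: passing from $-\tfrac{1}{3\lambda d'}\le -2$ to $6\lambda d'\le 1$ requires $d'(\lambda)>0$. That is precisely the sign fact that the paper's verification of $D>0$ establishes; you are entitled to it because it is cited from~\cite{BL19} (``$d(\lambda)$ is increasing''), but without noting it your reduction is incomplete — if $\lambda d'<0$ the target inequality would fail automatically. Modulo that small point the argument is sound, and your final inequality is even slightly softer than the paper's (it needs only $\mathrm{atanh}\,\beta>\beta$, i.e.\ the first Taylor term, rather than the two-term bound).
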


\begin{proof}
Boundedness follows from continuity on $\left[ 0,\frac{1}{2} \right]$. Moreover, since $\lambda$ is analytic and positive on $\left( 0, \frac{1}{2} \right)$, the function $f$ is $C^2$ on $\left( 0, \frac{1}{2} \right)$.
From~\eqref{eq_f}, we get
\[f''(\theta)=\frac 2 \theta +\frac{\lambda'(\theta)}{\theta\lambda(\theta)}.\]
Writing
\[f''(\theta)=\frac 1 \theta\left(2+\left(\frac{\partial \theta}{\partial h}\right)^{-1}\frac{\partial \log(\lambda)}{\partial h}\right),\]
and then using~\eqref{eqn_lambda_vs_theta} and~\eqref{eq_f}, we get
\[f''(\theta)=-\frac{2(1+6h+128h^3)\sqrt{1-4h}}{3h\left(-(1+8h)\sqrt{1-4h}+(1-2h+16h^2)\log\left(\frac{1+\sqrt{1-4h}}{1-\sqrt{1-4h}}\right)\right)}\]
for $\theta\in(0,1/2)$, that is $h\in(0,1/4)$. It suffices to prove that this is negative. The numerator is clearly positive, so it remains to show
\[-(1+8h)\sqrt{1-4h}+(1-2h+16h^2)\log\left(\frac{1+\sqrt{1-4h}}{1-\sqrt{1-4h}}\right)>0.\] 
We first note that for $h \in \left( 0,\frac{1}{4} \right)$
\[\log\left(\frac{1+\sqrt{1-4h}}{1-\sqrt{1-4h}}\right)=\sum_{j=0}^{\infty}\frac{2}{2j+1}(1-4h)^{j+\frac{1}{2}}>2(1-4h)^{\frac{1}{2}}+\frac{2}{3}(1-4h)^{\frac{3}{2}}.\]
Substituting this inequality above yields
\[-(1+8h)\sqrt{1-4h}+(1-2h+16h^2)\log\left(\frac{1+\sqrt{1-4h}}{1-\sqrt{1-4h}}\right)>\frac{1}{3}(1-4h)^2(5-8h)\sqrt{1-4h}>0,\]
as required.
\end{proof}

We will need to use the concavity of $f$ under the following form, which will be useful to estimate the remainder terms in exponentials resulting from applications of Proposition~\ref{prop_asympto_ratio}.
%(see the proof of Lemma~\ref{lem_first_moment_short_multicurves}).
\begin{corr}
\label{corr_diff_f}
Let $(g_n)$ be such that $\frac{g_n}{n}\rightarrow \theta$ with $0<\theta <\frac{1}{2}$. There exists a constant $b_{\theta}$ depending only on $\theta$ such that for $n$ large enough, for all $1\leq s\leq \ell\leq \theta n/2$ and for all $n_1,n_2,h_1,h_2$ satisfying $n_1+n_2=n+\ell$ and $h_1+h_2=g_n-s+1$, we have
\begin{align*}
	n_1 f\left(\tfrac{h_1}{n_1}\right) + n_2 f\left(\tfrac{h_2}{n_2}\right) -nf \left( \frac{g_n}{n} \right)\leq b_{\theta}\ell.
\end{align*}
\end{corr}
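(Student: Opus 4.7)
The plan is to combine Jensen's inequality, which follows from the concavity of $f$ (Lemma~\ref{lem:concave}), with a first-order Lipschitz bound to reduce the two-variable difference to a one-variable one.

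\textbf{Step 1 (Jensen).} Applying the concavity of $f$ with weights $\alpha_i = n_i/(n_1+n_2)$ at the points $x_i = h_i/n_i$, one gets
\[
\frac{n_1}{n_1+n_2} f\!\left(\tfrac{h_1}{n_1}\right) + \frac{n_2}{n_1+n_2} f\!\left(\tfrac{h_2}{n_2}\right) \leq f\!\left(\frac{h_1+h_2}{n_1+n_2}\right).
\]
Multiplying through by $n_1+n_2 = n+\ell$ and using $h_1+h_2 = g_n - s + 1$ yields
\[
n_1 f\!\left(\tfrac{h_1}{n_1}\right) + n_2 f\!\left(\tfrac{h_2}{n_2}\right) \leq (n+\ell)\, f\!\left(\frac{g_n-s+1}{n+\ell}\right).
\]
It therefore suffices to bound the one-variable quantity $(n+\ell)\,f(\theta'_n) - n\,f(\theta_n)$ by $b_\theta\,\ell$, where $\theta_n := g_n/n$ and $\theta'_n := (g_n-s+1)/(n+\ell)$.

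\textbf{Step 2 (Localization).} A direct computation gives
\[
\theta_n - \theta'_n \;=\; \frac{g_n\,\ell + n(s-1)}{n(n+\ell)},
\]
so that $0 \leq \theta_n - \theta'_n \leq C_\theta\,\ell/n$ with $C_\theta$ depending only on $\theta$, using $s \leq \ell$ and $g_n/n \to \theta$. Moreover, for $n$ large, both $\theta_n$ and $\theta'_n$ lie in a fixed compact subinterval $I \subset (0, 1/2)$: the upper bound follows from $\theta'_n \leq g_n/n \to \theta < 1/2$, while the lower bound uses $\ell \leq \theta n/2$ and $g_n/n \to \theta$ to get $\theta'_n \geq (\theta/2 - o(1))/(1+\theta/2) > 0$.

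\textbf{Step 3 (Lipschitz bound).} Split
\[
(n+\ell)\,f(\theta'_n) - n\,f(\theta_n) \;=\; (n+\ell)\bigl[f(\theta'_n) - f(\theta_n)\bigr] \;+\; \ell\,f(\theta_n).
\]
Since $f \in C^1((0,1/2))$ by Lemma~\ref{lem:concave}, it is Lipschitz on $I$ with some constant $L_\theta$; the first term is then bounded by $(n+\ell) \cdot L_\theta \cdot C_\theta \ell/n \leq L_\theta C_\theta (1+\theta/2)\,\ell$. The second term is bounded by $\ell \cdot \sup_{[0,1/2]} |f|$, which is finite by boundedness of $f$. Combining both bounds yields a constant $b_\theta$ as claimed.

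There is no serious obstacle: the conceptual content is entirely concentrated in the Jensen step, and the only point deserving care is verifying that $\theta'_n$ stays inside a compact subinterval of $(0,1/2)$ uniformly in $(s,\ell)$ subject to $\ell \leq \theta n/2$, so that the $C^1$ estimate on $f$ applies with a uniform Lipschitz constant.
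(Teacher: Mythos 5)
Your proof is correct and takes essentially the same route as the paper: both apply concavity of $f$ (Jensen) to reduce to a one-variable comparison, then use $C^1$-regularity of $f$ on a compact subinterval of $(0,1/2)$ plus boundedness of $f$ to conclude. The paper writes the Lipschitz step more compactly (bounding $(n+\ell)\bigl(\frac{g_n}{n}-\frac{g_n-s+1}{n+\ell}\bigr)$ directly by $2\ell$ and folding the extra $\ell f(\theta_n)$ term into $b_\theta = \max_{[0,1/2]}f + 2\max_{I}|f'|$), but the decomposition into a Lipschitz term and a boundedness term is the same as yours.
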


\begin{proof}
By concavity of $f$, we have
\[  n_1 f\left(\tfrac{h_1}{n_1}\right) + n_2 f\left(\tfrac{h_2}{n_2}\right) \leq (n+\ell) f \left( \frac{g_n-s+1}{n+\ell} \right). \]
On the other hand, for $n$ large enough, the assumptions imply $\left[\frac{g_n-s+1}{n+\ell},\frac{g_n}{n}\right]\subset \interv$ where $\frac{\theta}{2}+\frac{1}{4}<\frac{1}{2}$, so
\begin{align*}
 n_1 f\left(\tfrac{h_1}{n_1}\right) + n_2 f\left(\tfrac{h_2}{n_2}\right) &\leq (n+\ell) f\left( \frac{g_n}{n} \right) + (n+\ell)\left( \frac{g_n}{n}-\frac{g_n-s+1}{n+\ell} \right) \max_{\interv} |f'|\\
&\leq (n+\ell) f\left( \frac{g_n}{n} \right) + 2 \ell \max_{\interv} |f'|.
\end{align*}
This proves our claim with $b_{\theta}=\max_{[0,1/2]}f+2 \max_{\interv} |f'|$.
\end{proof}

\section{Isoperimetric inequalities}
\label{sec:iso}

The goal of this section is to prove Theorem~\ref{thm_isoperimetric}. For this, let us fix $\theta \in \left( 0,\frac{1}{2} \right)$ and a sequence $(g_n)$ such that $0 \leq  g_n \leq \frac{n+1}{2}$ for all $n$ and $\frac{g_n}{n} \to \theta$. For any $k_1, k_2$ with $k_1+k_2=2n$ and for any length vector $\vec{\ell}$, we will denote by $\C$ the number of \ksep{s} of lengths $\vec{\ell}$ in $\bT$. We will estimate the expectation of this quantity.

\begin{lem}\label{lem_first_moment_short_multicurves}
	Let $b'_{\theta}=b_{\theta}+2\log(6)$, where $b_{\theta}$ is defined in Corollary~\ref{corr_diff_f}, and take $\delta_{\theta}=\frac{\theta}{4(b'_{\theta}+\log(2))}$. Let also $K_{\theta}=\frac{20}{\theta}$. For $n$ large enough, for any $k_1, k_2$ with $k_1+k_2=2n$ and $K_{\theta}\log n\leq k_1\leq k_2$ and for any length vector $\ell$ with $|\vec{\ell}| \leq \delta_{\theta} k_1$, we have
\[\E \left[ \C \right] \leq \frac{1}{2^{|\vec{\ell}|} n^3}. \]
\end{lem}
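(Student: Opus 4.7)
The plan is to carry out a first-moment computation. Given a rooted triangulation $t \in \mathcal{T}(2n, g_n)$ together with a $(k_1, k_2)$-separating multicurve $m$ of length profile $\vec{\ell}$, cutting $t$ along $m$ produces an ordered pair $(t_1, t_2) \in \mathcal{T}_{\vec{\ell}}(k_1, g_1) \times \mathcal{T}_{\vec{\ell}}(k_2, g_2)$ whose distinguished oriented edges on the external faces encode the starting oriented edges of the cycles of $m$. Euler's formula applied to the cut gives $g_1 + g_2 = g_n - s + 1$ where $s$ is the number of cycles. Conversely, each such pair glues back uniquely (the distinguished edges determine the gluing), and the root edge of $t$ is a free choice among the $6n$ oriented edges of the result, yielding
\[
\E[\C] \;\leq\; \frac{6n}{\tau(n, g_n)} \sum_{g_1 + g_2 = g_n - s + 1} |\mathcal{T}_{\vec{\ell}}(k_1, g_1)| \cdot |\mathcal{T}_{\vec{\ell}}(k_2, g_2)|.
\]
I then apply Lemma~\ref{lem_filling_boundaries} to each factor, replacing $|\mathcal{T}_{\vec{\ell}}(k_i, g_i)|$ by $(3k_i + 3|\vec{\ell}|)^{s-1} \tau(n_i, g_i)$ with $n_i = (k_i + |\vec{\ell}|)/2$; using $s \leq |\vec{\ell}|$ and $3k_i + 3|\vec{\ell}| \leq 6n$, this contributes a combined prefactor of at most $e^{2\log(6) |\vec{\ell}|} \cdot n^{2(s-1)}$.

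The core step is to estimate the ratio $\tau(n_1, g_1) \tau(n_2, g_2)/\tau(n, g_n)$. Since a naive use of Lemma~\ref{lem_asympto} produces an $e^{o(n)}$ error that is too loose, I instead apply Lemma~\ref{lem_asympto} only to $\tau(n_1, g_1)$ (the smaller side, so the error is $o(n_1) = o(k_1)$) and use Proposition~\ref{prop_asympto_ratio} to compare $\tau(n_2, g_2)$ with $\tau(n, g_n)$, with parameters $m = n - n_2 = (k_1 - |\vec{\ell}|)/2 = O(k_1)$ and $h = g_n - g_2$, yielding an error of only $o(k_1)$. The exponent $n_1 f(g_1/n_1) + n_2 f(g_2/n_2) - n f(g_n/n)$ is bounded by $b_\theta |\vec{\ell}|$ via Corollary~\ref{corr_diff_f}. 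For the polynomial prefactors, the identity
\[
\frac{n_1^{2g_1} n_2^{2g_2}}{n^{2g_n}} \;=\; \frac{1}{n^{2(s-1)}} \cdot \left( \frac{n_1}{n} \right)^{2g_1} \left( \frac{n_2}{n} \right)^{2g_2}
\]
makes the $n^{-2(s-1)}$ factor cancel the $n^{2(s-1)}$ from Lemma~\ref{lem_filling_boundaries}, and the bound $1-x \leq e^{-x}$ together with $k_1 \leq k_2$ gives
\[
\left( \frac{n_1}{n} \right)^{2g_1} \left( \frac{n_2}{n} \right)^{2g_2} \;\leq\; \exp\!\left( - \frac{(k_1 - |\vec{\ell}|)(g_n - s + 1)}{n} \right) \;\leq\; e^{-\theta k_1 / 2}
\]
for $n$ large enough, using $g_n/n \to \theta$ and $|\vec{\ell}| \leq \delta_\theta k_1 \leq k_1/2$.

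Combining these bounds, the calibration $\delta_\theta = \theta/(4(b'_\theta + \log 2))$ with $b'_\theta = b_\theta + 2\log 6$ is chosen precisely so that $|\vec{\ell}| \leq \delta_\theta k_1$ makes the accumulated exponential growth $e^{(b'_\theta + \log 2)|\vec{\ell}|}$ coming from Corollary~\ref{corr_diff_f}, Lemma~\ref{lem_filling_boundaries} and the target factor $2^{-|\vec{\ell}|}$ bounded by $e^{\theta k_1/4}$, which is itself dominated by the decay $e^{-\theta k_1/2}$. The residual net decay $e^{-\theta k_1/4}$, combined with $k_1 \geq K_\theta \log n = 20 \log n / \theta$, gives a factor at most $n^{-5}$, which comfortably beats the $n^{-3}$ target after absorbing the polynomial prefactors $6n$ (for rooting) and $O(n)$ (for summing over $g_1$). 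The hard part will be handling the factor $a_\theta^{-(g_n - g_2)}$ coming from inverting Proposition~\ref{prop_asympto_ratio}'s lower bound: since $a_\theta \in (0,1)$, this grows exponentially in $g_1 = g_n - g_2 - s + 1$, and must be controlled either by the geometric counter-decay of $(n_1/n)^{2g_1}$ (available when $k_2 - k_1$ is large enough) or by exploiting the structural bound $g_1 \leq (n_1 + 1)/2 = O(k_1)$ which caps the effective range of the summation. A further subtlety is verifying that Proposition~\ref{prop_asympto_ratio}'s hypothesis $h \leq \min(g_n/2, (m+1)/2)$ holds in all terms of the sum; the regime where it fails (heavily imbalanced genus with $g_2 < g_n/2$) must be handled separately, possibly by reverting to a direct application of Lemma~\ref{lem_asympto} in that smaller sub-range.
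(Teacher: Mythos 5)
Your structural plan matches the paper's closely: the injection to pairs in $\T_{\vec{\ell}}(k_1,h_1)\times\T_{\vec{\ell}}(k_2,h_2)$ with the factor $6n$ for rooting, Lemma~\ref{lem_filling_boundaries} to pass to closed triangulations, the combination of Lemma~\ref{lem_asympto} on the small side with Proposition~\ref{prop_asympto_ratio} on the large side, Corollary~\ref{corr_diff_f} for the exponent, and the calibration of $\delta_\theta$, $K_\theta$ are all correct and are the paper's route. However, the last paragraph of your proposal is not a parenthetical remark -- it is exactly where the substance of the argument lies, and you leave it unresolved, so there is a genuine gap.

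Concretely: after inverting Proposition~\ref{prop_asympto_ratio} you pick up the factor $a_\theta^{-(h_1+s-1)}$, which can be as large as $e^{c k_1}$ for a constant $c$ that is not under your control (it depends on $a_\theta$, not on $\delta_\theta$). Your option (ii), the bound $h_1 = O(k_1)$, does nothing by itself -- it only says the bad factor has the same exponential scale as the decay you are trying to use, with no guarantee on which constant wins. Your option (i) is the right idea but needs to be made precise: the paper does this by a case split on whether $n_1 \le \min(a_\theta n_2, g_n/6)$. When it holds, the inequality $a_\theta^{-(h_1+s-1)} n_1^{2h_1+s-1} \le (n_1/a_\theta)^{2h_1+s-1} \le n_2^{2h_1+s-1}$ absorbs the bad factor completely (note the crucial $n_1/a_\theta \le n_2$, which is exactly the case hypothesis), after which the remaining $(n_2/n)^{2g_n}$ with $n_2 \le n-(1-\delta_\theta)n_1$ produces the decay. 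When $n_1 > \min(a_\theta n_2, g_n/6)$, both $n_1$ and $n_2$ are of order $n$ and one applies Lemma~\ref{lem_asympto} alone to both factors; the $e^{o(n)}$ error, which you dismissed as "too loose", is in fact fine here because the leading decay $e^{-2\theta(1-\delta_\theta)n_1}$ is itself of order $e^{-cn}$ in that regime, and this same case simultaneously disposes of your second concern about the domain of validity of Proposition~\ref{prop_asympto_ratio}. Without this case split and absorption step, the proof does not close; the two issues you flag as "hard" and "to be handled separately" are precisely the proof, and they must be worked out, not just named.

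A minor secondary point: your bound $(n_1/n)^{2g_1}(n_2/n)^{2g_2} \le e^{-\theta k_1/2}$ is a bit too strong as stated; what one actually gets is $\le e^{-c\theta k_1}$ for some $c<1/2$ once $s$ is subtracted from $g_n$ and $|\vec{\ell}|$ from $k_1$. This does not break the argument since $K_\theta=20/\theta$ leaves ample slack, but you should not present the constant $1/2$ as if it were exact.
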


\begin{proof}
	If there is a $(k_1, k_2)$-separating multicurve $\eta$ on a triangulation $\bT$ with lengths $\vec{\ell}=(\ell_1, \dots \ell_s)$, then the two connected components of its complement are two triangulations of the $\vec{\ell}$-gon\footnote{The starting edge $\vec{e}_0$ of each cycle of $\eta$ induces two edges of $\bT \backslash \eta$ which serve as the distinguished edges of the corresponding polygons -- to match previous conventions, one of these two edges needs to be reoriented with the polygon to its left.}. Moreover, the connected components of $\bT \backslash \eta$  have respectively $k_1$ and $k_2$ internal faces with $k_1+k_2=2n$, and respective genera $h_1$ and $h_2$ with $h_1+h_2=g_n-s+1$.

Therefore, the number of triangulations of genus $g_n$ with $2n$ triangles with a marked \ksep{} of lengths $\vec{\ell}=(\ell_1,\ell_2,\dots,\ell_s)$ is bounded by
\[6n\sum_{h_1+h_2=g_n-s+1}\left|\T_{\vec{\ell}}(k_1,h_1)\right|\left|\T_{\vec{\ell}}(k_2,h_2)\right|.\]
Note that this is not an equality\footnote{If for example a vertex $v_1$ appears on two boundary cycles of $t_1$ and a vertex $v_2$ appears on two boundary cycles of $t_2$, then gluing $t_1$ and $t_2$ with $v_1$ glued to $v_2$ does not yield a triangulation (the neighbourhood of $v_1 \sim v_2$ could look like two disjoint disks with identified centers).}.
By Lemma~\ref{lem_filling_boundaries}, this is bounded by \[6n\sum_{h_1+h_2=g_n-s+1}(6n_1)^{s-1}\tau(n_1,h_1)(6n_2)^{s-1}\tau(n_2,h_2)\]
with $n_i=\frac{k_i+|\vec{\ell}|}{2}$. Therefore, we can write
\begin{align*}
\E \left[ \C \right] &\leq 6n\sum_{h_1+h_2=g_n-s+1}\frac{(6n_1)^{s-1}\tau(n_1,h_1)(6n_2)^{s-1}\tau(n_2,h_2)}{\tau(n,g_n)}\\
&\leq 6^{2|\vec{\ell}|}n\sum_{h_1+h_2=g_n-s+1}\frac{n_1^{s-1}\tau(n_1,h_1)n_2^{s-1}\tau(n_2,h_2)}{\tau(n,g_n)}
\end{align*}

We recall that we have assumed $n_1 \leq n_2$. The idea will now be to estimate the numerator using Lemma~\ref{lem_asympto} if both pieces are macroscopic, and the more precise Proposition~\ref{prop_asympto_ratio} if $n_1$ is much smaller than $n_2$. We recall from Proposition~\ref{prop_asympto_ratio} the definition of the constant $a_{\theta}<1$ and distinguish two cases:
\begin{enumerate}
\item
If $n_1 \geq \min \left( a_{\theta} n_2, \frac{g_n}{6} \right)$, then both $n_1$ and $n_2$ are of order $n$. Hence, using Lemma~\ref{lem_asympto} and $n=e^{o(n)}$, we have 
\begin{align*}
	\E &\left[\C \right] \leq  \\ &\ \ 6^{2|\vec{\ell}|}\sum_{h_1+h_2=g_n-s+1}\frac{n_1^{2h_1+s-1}n_2^{2h_2+s-1}}{n^{2g_n}}\exp \left( n_1 f \left( \frac{h_1}{n_1} \right) + n_2 f \left( \frac{h_2}{n_2} \right) - n f \left( \frac{g_n}{n} \right)+o(n) \right),
\end{align*}
where the $o(n)$ is uniform in $(n_1, n_2, \vec{\ell}, h_1, h_2)$ (see the remark just after Lemma~\ref{lem_asympto}). In the denominator, we can write $2g_n=2h_1+2h_2+2s-2$. Using Corollary~\ref{corr_diff_f} and the inequality $n_1 \leq n_2 \leq n-(1-\delta_{\theta})n_1$, we have
\begin{align*}
	\E \left[ \C \right] &\leq\left(1- \frac{(1-\delta_{\theta})n_1}n\right)^{2g_n}\exp(b'_{\theta}|\vec{\ell}|+o(n))\\
&\leq \exp(-2n_1 (1-\delta_{\theta})\theta +b'_{\theta}|\vec{\ell}| +o(n_1)).
\end{align*}

\item If $n_1 \leq \min \left( a_{\theta} n_2, \frac{g_n}{6} \right)$, then we have
\[g_n-h_2=h_1+s-1\leq n_1+|\vec{\ell}| \leq \frac{g_n}{6}+\delta_{\theta n} \leq \frac{g_n}{2}\]
for $n$ large enough, so Proposition~\ref{prop_asympto_ratio} applies to the ratio $\frac{\tau(n,g_n)}{\tau(n_2, h_2)}$. Combining this with Lemma~\ref{lem_asympto} on $(n_1,h_1)$, we can bound $\E \left[ \C \right]$ by
\[ 6^{2|\vec{\ell}|} n \sum_{\substack{h_1+h_2=g_n-s+1\\ h_1 \leq k_1}} a_{\theta}^{-(h_1+s-1)} \frac{n_1^{2h_1+s-1}n_2^{2h_2+s-1}}{n^{2g_n}}
		e^{
			n_1 f \left( \frac{h_1}{n_1} \right)+ n_2 f \left( \frac{h_2}{n_2} \right) - n  f \left( \frac{g_n}{n} \right) +o(n_1) 
		},
		\]
where the $o(n_1)$ is uniform in $(\vec{\ell}, h_1, h_2)$ as $n, n_1 \to +\infty$ (we need $n_1 \to +\infty$ because of Lemma~\ref{lem_asympto}). Moreover, we have \[a_{\theta}^{-h_1+s-1} n_1^{2h_1+s-1} \leq \left( \frac{n_1}{a_{\theta}} \right)^{2h_1+s-1} \leq n_2^{2h_1+s-1},\]
so, using also Corollary~\ref{corr_diff_f}, we can write
\[\E \left[ \C \right] \leq n\sum_{\substack{h_1+h_2=g_n-s+1\\ h_1 \leq k_1}} \left(\frac{n_2}{n}\right)^{2g_n}\exp \left( b'_{\theta}|\vec{\ell}|+o(n_1) \right).\]
Finally, just like in the first item, we have $n_2 \leq n-(1-\delta_{\theta}) n_1$, so summing over $0 \leq h_1 \leq k_1$ we have
\[\E \left[ \C \right] \leq n k_1 \exp \left( -2n_1(1-\delta_{\theta}) \theta + b'_{\theta}|\vec{\ell}|+o(n_1) \right),\]
where the $o(n_1)$ is uniform in $(\vec{\ell}, h_1, h_2)$ as $n,n_1 \to +\infty$.
\end{enumerate}

Hence in both cases, using $\delta_{\theta}<1/2$, for $n_1$ large enough (and therefore for $n$ large enough), we have
\begin{align}
\E \left[ \C \right] &\leq \frac{n k_1}{2^{|\vec{\ell}|}}\exp \left( -n_1 \theta +(b'_{\theta}+\log(2))|\vec{\ell}| \right) \label{eqn_bound_number_multicurves}\\
%&\leq \frac{n^2}{2^{|\vec{\ell}|}} \exp \left( -n_1 \theta+(b'+\log(2))2\delta n_1 \right) \nonumber \\
&\leq \frac{n^2}{2^{|\vec{\ell}|}} \exp(-n_1 \theta /2) \nonumber \\
&\leq\frac{1}{2^{|\vec{\ell}|} n^3}, \nonumber
\end{align}
where the second inequality comes from $|\vec{\ell}| \leq 2\delta_{\theta} n_1$ and the choice of $\delta_{\theta}$, and the third from $n_1 \geq \frac{K_{\theta}}{2} \log n$ and the choice of $K_{\theta}$.
\end{proof}

\begin{proof}[Proof of Theorem~\ref{thm_isoperimetric}]
Let $\mathcal C_n$ be the event that there exists a \ksep{} of total length $\ell$ in $\bT$ for some $K_{\theta}\log n\leq k_1\leq k_2$ satisfying $k_1+k_2=2n$ and $\ell \leq \delta_{\theta} k_1$.
We have
\begin{align}\label{eqn_event_Cn}
\P \left( \mathcal C_n \right) &=\P\left(\bigcup_{k_1+k_2=2n\atop K_{\theta}\log n\leq k_1\leq k_2}\bigcup_{|\vec{\ell}| \leq \delta_{\theta} k_1} \left\{ \C\geq 1 \right\} \right)\\
&\leq \sum_{k_1+k_2=2n\atop K_{\theta}\log n\leq k_1\leq k_2}\sum_{|\vec{\ell}| \leq \delta_{\theta} k_1} \frac{1}{2^{|\vec{\ell}|} n^3} \nonumber
\end{align}
for $n$ large enough by Lemma~\ref{lem_first_moment_short_multicurves}. For any $\ell$, the number of length vectors $\vec{\ell}$ with $|\vec{\ell}|=\ell$ is $2^{\ell-1}$, so $\P \left( \mathcal C_n \right) \leq \sum_{k_1+k_2=2n} \sum_{\ell=1}^n \frac{1}{2n^3} \leq \frac{1}{n}$, which concludes the proof.
\end{proof}

\section{Distances}
\label{sec:dist}

\subsection{Lower bounds}

Our goal is now to prove Theorem~\ref{thm_diameter} on the diameter of high genus triangulations. We start with the lower bound.

\begin{prop}\label{prop_lower_bound}
For any $\theta \in \left[ 0,\frac{1}{2} \right)$, there is a constant $c_{\theta}>0$ such that the following holds. Let $(g_n)$ be a sequence such that $0 \leq g_n \leq \frac{n+1}{2}$ for all $n$ and $\frac{g_n}{n} \to \theta$. Conditionally on $\bT$, let $x_n,y_n$ be two uniform independent vertices of $\bT$. Then we have
\[ \P \left( d_{\bT}(x_n,y_n) \geq c_{\theta} \log n \right) \xrightarrow[n \to +\infty]{} 1.\]
\end{prop}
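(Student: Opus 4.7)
The plan is a first moment argument. Conditionally on $\bT$, since $x_n$ and $y_n$ are independent uniform vertices of $V(\bT)$,
\[
\P\bigl(d_{\bT}(x_n, y_n) \leq r \bigm| \bT\bigr) = \frac{\#\{(v, w) \in V(\bT)^2 : d_{\bT}(v, w) \leq r\}}{|V(\bT)|^2}.
\]
Since $|V(\bT)| = n + 2 - 2g_n \sim (1-2\theta) n$ is deterministic, and since the number of vertices at distance $\leq r$ from any $v$ is of order $|B_{r+1}(v)|$ (via Euler's formula applied to the triangulation of a multipolygon), the result will follow from
\[
\E\Bigl[\sum_{v\in V(\bT)} |B_{r+1}(v)|\Bigr] = o(n^2) \quad \text{for } r = c_\theta \log n \text{ with } c_\theta > 0 \text{ small enough.}
\]
The case $\theta = 0$ is classical (typical distances are $n^{1/4}$), so I focus on $\theta\in(0,\tfrac12)$.

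The heart of the proof is an exponential upper bound on expected ball volumes summed over centers: there exist constants $A_\theta > 1$ and $K_\theta > 0$ such that for all $r \geq 1$ and $n$ large,
\[
\E\Bigl[\sum_{v} |B_r(v)|\Bigr] \leq K_\theta \, n \, A_\theta^r.
\]
This is established by an enumerative argument parallel to that of Lemma~\ref{lem_first_moment_short_multicurves}. A triangulation $t \in \mathcal{T}(2n, g_n)$ with a marked vertex $v$ such that $|B_r^t(v)| = k$ decomposes into the ball $B_r^t(v)$ --- a triangulation of a multipolygon with $k$ internal triangles, boundary multicurve of total length $\ell$ consisting of $s$ cycles, and some genus $h$ --- glued to its complement, another triangulation of the matching multipolygon with $2n - k$ internal triangles and genus $g_n - h - s + 1$. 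Applying Lemma~\ref{lem_filling_boundaries} to bound the counts of both pieces and Proposition~\ref{prop_asympto_ratio} together with the concavity estimate of Corollary~\ref{corr_diff_f} to control the resulting ratio $\tau(\cdot) \tau(\cdot) / \tau(n, g_n)$, we obtain a bound of the form $\mathrm{poly}(n) \cdot e^{-\alpha_\theta k + \beta_\theta \ell}$ on the expected number of such configurations, at least when $k$ exceeds the logarithmic threshold $K_\theta \log n$ of Theorem~\ref{thm_isoperimetric}. Summing over $\ell$ (of order at most $3k$), over $k$, and handling the small-ball contribution ($k \leq K_\theta \log n$) trivially using that then $|B_r(v)| \leq K_\theta \log n$, we obtain the desired growth bound.

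With the key estimate in hand, choosing $c_\theta$ with $c_\theta \log A_\theta < 1$, we have for $r = c_\theta \log n$,
\[
\P\bigl(d_{\bT}(x_n, y_n) \leq c_\theta \log n\bigr) \leq \frac{K_\theta \, n \, A_\theta^{c_\theta \log n + 1}}{((1-2\theta) n)^2 (1 + o(1))} = O\bigl(n^{c_\theta \log A_\theta - 1}\bigr) \xrightarrow[n \to \infty]{} 0,
\]
which proves the proposition. The main obstacle will be establishing the uniform exponential volume bound for $r$ of order $\log n$: while the local convergence of $\bT$ to the PSHT from~\cite{BL19} gives exponential ball growth in distribution for any fixed $r$, lifting this to a uniform bound over the logarithmic range requires the precise enumerative estimates of Section~\ref{sec:prel} applied uniformly over all possible ball configurations, in a way quite analogous to the proof of the isoperimetric inequality.
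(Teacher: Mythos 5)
Your overall strategy---a first moment bound showing $\E\bigl[\sum_{v}|B_r(v)|\bigr]=o(n^2)$ for $r=c_\theta\log n$---is reasonable in spirit, but the way you propose to establish the key estimate does not work, and the gap is not cosmetic.

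The central problem is that your decomposition (ball $+$ complement, then bound each piece by Lemma~\ref{lem_filling_boundaries} and invoke Proposition~\ref{prop_asympto_ratio}) forgets that the ball has radius $r$. After applying Lemma~\ref{lem_filling_boundaries} to the ball piece, all you retain is that it is \emph{some} triangulation of the $\vec\ell$-gon with $k$ internal faces; the resulting upper bound on $\E\bigl[\#\{v:|B_r(v)|=k,\ |\partial B_r(v)|=\ell\}\bigr]$ is therefore independent of $r$. But an $r$-independent upper bound on $\E\bigl[\sum_v|B_r(v)|\bigr]$ must hold for all $r$, in particular $r$ of order $n$, where the sum is deterministically $\Theta(n^2)$. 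Hence no such bound can be better than $\Theta(n^2)$, and you cannot extract $n\,A_\theta^r$ from it. The exponential-in-$r$ form of the claimed bound is not reachable by this route.

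A second, concrete problem is the sum over $\ell$. The enumerative machinery from Lemma~\ref{lem_first_moment_short_multicurves}/Corollary~\ref{corr_diff_f} gives, for a piece of size $k$ and perimeter $\ell$, a factor of order $\exp\bigl(-\tfrac{\theta}{2}(k+\ell)+(b'_\theta+\log 2)\ell\bigr)$. The coefficient $b'_\theta+\log 2$ is large (it includes $2\log 6$ and $\max|f'|$, and already $f(0)=\log 12\sqrt3\approx3$), while $\theta/2<1/4$. So for $\ell$ near its maximum value $3k$ the exponent is strongly positive, and summing over $\ell\le 3k$ diverges exponentially in $k$. The isoperimetric proof avoids this precisely by restricting to $\ell\le\delta_\theta k$ with $\delta_\theta$ \emph{chosen small enough} so that the positive term is dominated; you cannot impose such a restriction on the perimeter of a ball, because balls in triangulations can perfectly well have perimeter comparable to their volume.

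The paper's argument sidesteps both issues by counting a different object: \emph{simple paths} of length $\ell$ between $x_n$ and $y_n$. Slitting such a path open produces a single triangulation of a $2\ell$-gon, so the configuration is controlled by one perimeter parameter $\ell$ only (no volume $k$ and no genus split to sum over), the map is injective, and Lemmas~\ref{lem_ratio} and~\ref{lem_filling_boundaries} give $\E[\PP]\lesssim n^{-1}\lambda(\theta)^{-\ell+o(\ell)}$. A union bound over $\ell\le\tfrac{\log n}{2\log(1/\lambda(\theta))}$ then finishes. This is both simpler and sharper than the ball-volume route, and it naturally produces a bound that decays in $n$ because the number of pairs $(x_n,y_n)$ in the denominator is $\Theta(n^2)$ while the numerator grows only like $n\cdot\lambda(\theta)^{-\ell}$.

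If you want to keep the flavour of your approach, you would need to replace the ``forgetful'' count of the ball piece by a genuine peeling or layer-by-layer count that actually remembers $r$ --- but at that point you are essentially counting paths from the center to the boundary, and you might as well count paths between $x_n$ and $y_n$ directly as the paper does.
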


\begin{proof}
The proof consists of a first moment computation on short paths in $\bT$. More precisely, we denote by $\PP$ the number of simple (i.e. vertex-injective) paths of length $\ell$ in $\bT$ between $x_n$ and $y_n$.

Let $(t,\vec{e}_0,x,y, \gamma)$ be a triangulation with $2n$ faces and genus $g$, rooted at $\vec{e}_0$, equipped with two marked vertices $x,y$ and a simple path $\gamma$ from $x$ to $y$. When we slit $\gamma$ open into a boundary of length $2\ell$, we obtain a triangulation of the $2\ell$-gon that we root at the boundary edge started from $x$ which has the $2\ell$-gon on its left.
Considering the original root $\vec{e}_0$ as an additional marked oriented edge, we obtain a map in $\T_{(2\ell)}(2n,g)$ with a marked oriented edge. This operation is injective. Therefore, we have
\begin{align*}
\E \left[ \PP \right] & \leq\frac{6n|\T_{(2\ell)}(2n,g_n)|}{(n+2-2g)^2\tau(n,g_n)} \\
& \leq \frac{6+o(1)}{(1-2\theta)^2}\frac{|\T_{(2\ell)}(2n,g_n)|}{n\tau(n,g_n)}\\
& \leq \frac{6+o(1)}{(1-2\theta)^2}\frac{\tau(n+\ell,g_n)}{n\tau(n,g_n)},
\end{align*}
where the $o(1)$ is uniform in $\ell$, and the last inequality comes from Lemma~\ref{lem_filling_boundaries}. Using Lemma~\ref{lem_ratio} and assuming $\ell=O \left( \log n \right)$, we get
\begin{align*}
\E \left[ \PP \right] \leq \frac{6+o(1)}{(1-2\theta)^2}\frac{1}{n\lambda(\theta)^{\ell+o(\ell)}}.
\end{align*}
Finally, if $\ell\leq \frac{\log n}{2\log(1/\lambda(\theta))}$, this becomes
\begin{align*}
\E \left[ \PP \right] \leq \frac{1}{n^{1/2+o(1)}},
\end{align*}
where the $o(1)$ is uniform in $\ell$. This concludes the proof by a union bound over $\ell \leq \frac{\log n}{2\log(1/\lambda(\theta))}$.
\end{proof}

\subsection{Diameter}

We now move on to the upper bound on the diameter. For this, we will rely mostly on the isoperimetric inequality obtained in Theorem~\ref{thm_isoperimetric}. More precisely, applying it to balls in $\bT$, we will obtain the following intermediate result. We recall that $\delta_{\theta}, K_{\theta}>0$ are given by Theorem~\ref{thm_isoperimetric}.

\begin{lem}\label{lem_ball_growth}
Let $\theta \in \left( 0, \frac{1}{2} \right)$ and let $(g_n)$ be a sequence such that $0 \leq g_n \leq \frac{n+1}{2}$ for all $n$ and $\frac{g_n}{n} \to \theta$. With probability $1-o(1)$ as $n \to +\infty$, for every vertex $x$ of $\bT$ and every $r \geq K_{\theta} \log n$, we have
\begin{enumerate}
\item either $|\partial \B_r(x)|\geq \frac{\delta_{\theta}}{2}|\B_r(x)|$,
\item or $|\B_{r+K_{\theta}\log n}(x)|\geq \frac{4n}{3}$.
\end{enumerate}
\end{lem}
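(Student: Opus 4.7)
The plan is to derive the lemma almost deterministically from Theorem~\ref{thm_isoperimetric}. I would work throughout on the high-probability event $\mathcal{A}$ on which the conclusion of Theorem~\ref{thm_isoperimetric} holds.

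The first ingredient is the following \emph{key observation}: on $\mathcal{A}$, for any vertex $y$ of $\bT$ and any radius $s$ with $K_\theta \log n \leq |\B_s(y)| \leq 4n/3$, one has $|\partial \B_s(y)| \geq \tfrac{\delta_\theta}{2} |\B_s(y)|$. To obtain this, I would apply Theorem~\ref{thm_isoperimetric} to the multicurve $\partial \B_s(y)$, which separates $\bT$ into two pieces of sizes $|\B_s(y)|$ and $2n - |\B_s(y)|$. If $|\B_s(y)| \leq n$, then $k_1 := |\B_s(y)| \geq K_\theta \log n$ and the isoperimetric bound directly yields $|\partial \B_s(y)| > \delta_\theta |\B_s(y)|$. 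If instead $n < |\B_s(y)| \leq 4n/3$, then $k_1 = 2n - |\B_s(y)| \in [2n/3, n]$, so $k_1 \geq \tfrac{1}{2} |\B_s(y)|$ and $k_1 \geq K_\theta \log n$ for $n$ large; Theorem~\ref{thm_isoperimetric} then gives $|\partial \B_s(y)| > \delta_\theta k_1 \geq \tfrac{\delta_\theta}{2} |\B_s(y)|$.

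The second ingredient is a purely deterministic consequence of Lemma~\ref{lem:ball_r+1}. Whenever $\B_r(x) \neq \bT$, the boundary $\partial \B_r(x)$ is a non-empty multicurve and hence $|\partial \B_r(x)| \geq 1$; Lemma~\ref{lem:ball_r+1} then gives $|\B_{r+1}(x)| - |\B_r(x)| \geq 1/3$, which combined with integrality of both sides forces $|\B_{r+1}(x)| \geq |\B_r(x)| + 1$. Iterating from $|\B_1(x)| \geq 1$ (which holds since every vertex of $\bT$ has at least one incident face) yields the linear lower bound $|\B_r(x)| \geq r$ for every $r \geq 1$ for which $\B_{r-1}(x) \neq \bT$.

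To conclude, I fix a vertex $x$ and a radius $r \geq K_\theta \log n$. If $\B_r(x) = \bT$ then $|\B_{r + K_\theta \log n}(x)| = 2n \geq \tfrac{4n}{3}$ and option~2 is immediate. Otherwise the linear lower bound yields $|\B_r(x)| \geq r \geq K_\theta \log n$, and two sub-cases remain: either $|\B_r(x)| > 4n/3$, in which case option~2 holds by nestedness of balls, or $K_\theta \log n \leq |\B_r(x)| \leq 4n/3$, in which case the key observation delivers option~1. The main obstacle in setting up the proof is the temptation to also treat a ``small ball'' regime with $|\B_r(x)| < K_\theta \log n$ by an exponential growth argument (which would need $K_\theta$ enlarged in terms of the growth factor $1 + \delta_\theta/6$); the deterministic lower bound $|\B_r(x)| \geq r$ elegantly rules this regime out for every $r \geq K_\theta \log n$, so the case analysis above actually suffices.
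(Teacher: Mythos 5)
There is a genuine gap in the ``key observation,'' and it is fatal for the case analysis you build on top of it. Theorem~\ref{thm_isoperimetric} speaks only about \emph{$(k_1,k_2)$-separating} multicurves, i.e.\ multicurves whose complement has \emph{exactly two} face-connected components. The multicurve $\partial \B_s(y)$ need not have this property: the complement $\bT\setminus \B_s(y)$ of a ball can have many face-connected components (think of a ball that is an annular neighbourhood of a separating system of curves, or more generally a ball that chops the surface into several pieces). So one cannot apply the theorem to $\partial\B_s(y)$ viewed as a whole. Worse, the observation itself is false: imagine a ball $\B_s(y)$ of size close to $4n/3$ whose complement splits into $\Theta\!\left(\tfrac{n}{\log n}\right)$ components each of size a bit less than $K_\theta\log n$. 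Such a configuration is fully compatible with Theorem~\ref{thm_isoperimetric} (which says nothing about components below the $K_\theta\log n$ threshold), yet $|\partial\B_s(y)|$ can then be of order $n/\log n$, far below $\tfrac{\delta_\theta}{2}|\B_s(y)|=\Theta(n)$.

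In exactly this regime the lemma is still true, but via conclusion~2, not conclusion~1: all those small components are swallowed by $\B_{s+K_\theta\log n}(y)$. Your case analysis, however, only falls back to conclusion~2 when $|\B_r(x)|>4n/3$, so it misses this. To repair the argument you have to do what the paper does: decompose $\bT\setminus \B_r(x)$ into its face-connected components, apply Theorem~\ref{thm_isoperimetric} separately to the boundary of \emph{each} component $c$ with $|c|\geq K_\theta\log n$ (each such $\partial c$ genuinely is a $(|c|,2n-|c|)$-separating multicurve, since $B_r(x)$ is face-connected and every other component is face-adjacent to it), and split into two cases depending on whether the total size of the large components is at least $\tfrac12|\B_r(x)|$ (yielding conclusion~1) or not (in which case the small components, each absorbed within $K_\theta\log n$ extra steps, force conclusion~2). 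Your ``second ingredient'' (the deterministic bound $|\B_r(x)|\geq r$ via Lemma~\ref{lem:ball_r+1}) is correct and is also used in the paper, but it does not circumvent the issue above.
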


\begin{proof}
We may assume that the conclusion of Theorem~\ref{thm_isoperimetric} holds, i.e. the event $\mathcal{C}_n$ of~\eqref{eqn_event_Cn} does not occur.

If this is the case, let $r\geq K_{\theta} \log n$ and let $x$ be a vertex of $\bT$ such that $|\B_r(x)| < \frac{4n}{3}$ (if not, the second item holds). The ball $\B_r(x)$ splits its complement in $\bT$ into several face-connected components. Let $\mathcal A$ (resp. $\mathcal B$) be the set of those components which have at least (resp. less than) $K_{\theta} \log n$ triangles. Let also $|\mathcal{A}|$ (resp. $|\mathcal{B}|$) be the total number of internal faces of the components of $\mathcal{A}$ (resp. $\mathcal{B}$).
\begin{enumerate}
\item If $|\mathcal A|\geq \frac{1}{2}|\B_r(x)|$, since $|\B_r(x)|\geq r\geq K_{\theta}\log n$, we can apply condition $\mathcal C_n$ to each component $c$ of $\mathcal{A}$:
\[|\partial \B_r(x)|\geq |\partial \mathcal A| \geq \delta_{\theta} \sum_{c \in \mathcal{A}} \min\left(|\B_r(x)|,|c|\right). \]
If one of the terms in the sum is equal to $|\B_r(x)|$, the conclusion is immediate. If not, we get
\[|\partial \B_r(x)|\geq \delta_{\theta} |\mathcal{A}| \geq \frac{\delta_{\theta}}{2} |\B_r(x)|.\]
\item If $|\mathcal A|< \frac{1}{2}|\B_r(x)|$ we note that $\mathcal B \subset \B_{r+K_\theta\log n}(x)$, so
\[ \left| \B_{r+K_\theta\log n}(x) \right| \geq |\B_r(x)|+|\mathcal B|=2n-|\mathcal A|\geq 2n-\frac{1}{2}|\B_r(x)| \geq \frac{4n}{3}.\]
\end{enumerate}
\end{proof}

To finish the proof of Theorem~\ref{thm_diameter}, we just need to apply the last lemma to the growth of two balls around two vertices $x$ and $y$, until both of their volumes exceed $\frac{4n}{3}$, so that the two balls must intersect.

\begin{proof}[Proof of Theorem~\ref{thm_diameter}]
The lower bound is given by Proposition~\ref{prop_lower_bound}. For the upper bound, we write $C'_{\theta}=2K_{\theta}+\frac 1{\log\left(1+\delta_{\theta}/6\right)}$, and $d=\left\lceil\frac{\log (4n/3)}{\log\left(1+\delta_{\theta}/6\right)}\right\rceil$. With probability $1-o(1)$ as $n \to+\infty$, the conclusion of Lemma~\ref{lem_ball_growth} holds, so for every vertex $x$ of $\bT$:
\begin{itemize}
\item either for all $r\in [K_{\theta}\log n, K_{\theta}\log n+d]$ we have $|\partial B_r(x)|\geq \frac{\delta_{\theta}}{2} \left| \B_r(x) \right|$ which, combined with Lemma~\ref{lem:ball_r+1}, implies
\[| \B_{C_{\theta}'\log n}(x)|\geq  |\B_{K_{\theta}\log n+d}(x)|\geq \left(1+\frac{\delta_{\theta}}{6}\right)^d|\B_{K_\theta\log n}(x)|\geq \left(1+\frac{\delta_{\theta}}{6}\right)^d \geq 4n/3;\]
\item or there exists $r_1 \in [K_{\theta}\log n, K_{\theta}\log n+d]$ such that $|\B_{r_1}(x)|\geq 4n/3$, which implies
\[| \B_{C'_{\theta}\log n}(x)|\geq  |\B_{r_1}(x)|\geq 4n/3.\]
\end{itemize} 
Therefore, for any pair $(x,y)$ of vertices of $\bT$, the balls $\B_{C_\theta'\log n}(x)$ and $\B_{C_\theta'\log n}(y)$ contain at least $\frac{4n}{3}$ triangles each, so they intersect, so
\[d_{\bT}(x,y)\leq 2C_{\theta}'\log n,\] 
which concludes the proof.
\end{proof}

\subsection{Typical distances}

\begin{defn}
    Let $\eps>0$. Let $t$ be a finite triangulation of size $n$ and let $f$ be a face of $t$. We say that $f$ is \emph{$\eps$-isolated} if there is a $(k_1,k_2)$-separating multicurve $\gamma$ on $t$ such that:
    \begin{itemize}
        \item the face $f$ belongs to the connected component of $t \backslash \gamma$ which has size $k_1$,
        \item we have $k_2 \geq \sqrt{n}$,
        \item the total length $\ell$ of $\gamma$ satisfies $\ell \leq \eps \min( k_1, k_2)$.
    \end{itemize}
    We also denote by $\mathrm{Iso}_{\eps}(t)$ the number of $\eps$-isolated faces of $t$.
\end{defn}

\begin{prop}\label{prop_isolated_points}
    For any $\theta \in \left(0,\frac{1}{2} \right)$ and $\eta>0$, there is a constant $\eps>0$ such that the following holds. Let $g_n$ be a sequence such that $0 \leq g_n \leq \frac{n+1}{2}$ and $\frac{g_n}{n} \to \theta$. Then for $n$ large enough, we have
    \[ \P \left( \iso \geq \eta n \right) \leq \eta.\]
\end{prop}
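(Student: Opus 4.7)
The proof combines Theorem~\ref{thm_isoperimetric} with a first-moment computation in the spirit of Lemma~\ref{lem_first_moment_short_multicurves}. Fix $\eps < \delta_\theta$ (to be further constrained below) and work on the high-probability event $\mathcal{C}_n^c$ from Theorem~\ref{thm_isoperimetric}. I first claim that on this event, every $\eps$-isolated face $f$ must lie in a component of size $k < K_\theta \log n$ whose separating multicurve has length $\leq \eps k$. Indeed, write $(k_1, k_2)$ for the sizes of the two components of $\bT \setminus \gamma$ with $f$ on the $k_1$-side. If $k_1 \leq k_2$, then $|\gamma| \leq \eps k_1 \leq \delta_\theta k_1$, so the isoperimetric event forces $k_1 < K_\theta \log n$. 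If instead $k_1 > k_2$, then $k_2 \geq \sqrt{n}$ by definition of isolation, which exceeds $K_\theta \log n$ for $n$ large; applying Theorem~\ref{thm_isoperimetric} with the roles of $k_1, k_2$ swapped then yields a contradiction.

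Let $N_n^{\star}$ count those faces $f$ for which such a small-component, short-boundary multicurve exists; by the above, $\iso \leq N_n^{\star}$ on $\mathcal{C}_n^c$. Exchanging the order of summation gives
\[ \E\bigl[N_n^{\star}\bigr] \leq \sum_{k=\lceil 1/\eps \rceil}^{\lceil K_\theta \log n \rceil} \ \sum_{\vec{\ell}\,:\,|\vec{\ell}| \leq \eps k} k \cdot \E\bigl[\mathbf{C}^{(n)}_{\vec{\ell}}(k, 2n-k)\bigr], \]
where the lower bound $k \geq \lceil 1/\eps \rceil$ comes from the fact that a separating multicurve must have $|\vec{\ell}| \geq 1$. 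The key observation is that by choosing $\eps$ small (independently of $n$), we have $n_1 = (k + |\vec{\ell}|)/2 \geq 1/(2\eps)$, which is large enough for the intermediate estimate~\eqref{eqn_bound_number_multicurves} from the proof of Lemma~\ref{lem_first_moment_short_multicurves} to apply:
\[ \E\bigl[\mathbf{C}^{(n)}_{\vec{\ell}}(k, 2n-k)\bigr] \leq \frac{nk}{2^{|\vec{\ell}|}} \exp\bigl(-n_1 \theta + (b'_\theta + \log 2)|\vec{\ell}|\bigr). \]

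Plugging this in, summing over the $2^{\ell-1}$ length vectors $\vec{\ell}$ of total length $\ell$, and choosing $\eps$ small enough that $(b'_\theta + \log 2)\eps < \theta/4$ (so the exponential growth in $\ell$ is absorbed into the exponential decay in $k$), we obtain $\E[N_n^{\star}] \leq C_\theta\, n \sum_{k \geq \lceil 1/\eps \rceil} k^2 e^{-k\theta/4}$. The tail sum is at most $O\bigl((1/\eps)^2 e^{-\theta/(4\eps)}\bigr)$, which tends to $0$ as $\eps \to 0$. Thus we can choose $\eps$ small enough so that $\E[N_n^{\star}] \leq \eta^2 n / 2$, and Markov's inequality combined with $\P(\mathcal{C}_n) = o(1)$ gives $\P(\iso \geq \eta n) \leq \P(\mathcal{C}_n) + \P(N_n^{\star} \geq \eta n) \leq o(1) + \eta/2 \leq \eta$ for $n$ large. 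The main technical subtlety is that the bound~\eqref{eqn_bound_number_multicurves} was stated in Lemma~\ref{lem_first_moment_short_multicurves} under the assumption $k_1 \geq K_\theta \log n$, whereas here $k$ ranges over $[\lceil 1/\eps \rceil, K_\theta \log n]$; however, inspection of the proof shows that it only genuinely requires $n_1$ to exceed a constant depending on $\theta$, and the constraint $k \geq 1/\eps$ emerging from the definition of isolation is exactly what makes this harmless.
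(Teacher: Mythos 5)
Your proposal is correct and follows essentially the same strategy as the paper's own proof: use Theorem~\ref{thm_isoperimetric} to restrict attention to $\eps$-isolated faces with $\eps^{-1} \leq k_1 < K_\theta \log n$, carry out a first-moment bound on this restricted count via the intermediate estimate~\eqref{eqn_bound_number_multicurves}, and choose $\eps$ small enough that the resulting tail sum is below $\eta^2$ before applying Markov. Your explicit remark that~\eqref{eqn_bound_number_multicurves} only needs $n_1$ to exceed a fixed constant — which the constraint $k_1 \geq \eps^{-1}$ guarantees once $\eps$ is small — is a correct reading of the proof of Lemma~\ref{lem_first_moment_short_multicurves} and is a subtlety the paper handles implicitly.
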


\begin{proof}
	First, by Theorem~\ref{thm_isoperimetric}, it is sufficient to treat the case where $k_1 \leq K_{\theta} \log n$, and in particular $k_1<k_2$. Moreover, if $k_1 \leq \eps^{-1}$, then the length of the multicurve satisfies $\ell \geq 1 \geq \eps k_1$.
    Therefore, we denote by $\mathrm{Iso}^*_{\eps}(t)$ the number of $\eps$-isolated faces of a triangulation $t$ with the additional condition that $\eps^{-1} \leq k_1 \leq K_{\theta} \log n$. Like for Theorem~\ref{thm_isoperimetric}, the proof consists mostly of a first moment computation. More precisely, we have
    \begin{align*}
        \E \left[ \isostar \right] & \leq \sum_{k_1=\eps^{-1}}^{K_{\theta} \log n} \sum_{|\vec{\ell}| \leq \eps k_1} k_1 \E \left[ \C \right]\\
        & \leq \sum_{k_1=\eps^{-1}}^{K \log n} \sum_{|\vec{\ell}| \leq \eps k_1} k_1^2 n \exp \left( - \theta k_1 + b'_{\theta} |\vec{\ell}| \right)
    \end{align*}
    for $n$ large enough by~\eqref{eqn_bound_number_multicurves}. The end of the proof is similar to that of Theorem~\ref{thm_isoperimetric}: the number of $\vec{\ell}$ with $|\vec{\ell}|=\ell$ is $2^{\ell-1}$, so
    \begin{align*}
        \E \left[ \isostar \right] & \leq \sum_{k_1=\eps^{-1}}^{K \log n} \sum_{\ell=1}^{\eps k_1} k_1^2 n \exp \left( - \theta k_1 + \left( b'_{\theta}+\log 2 \right) |\vec{\ell}| \right)\\
        & \leq n \sum_{k_1=\eps^{-1}}^{+\infty} k_1^3 \exp \left( -\frac{\theta k_1}{2} \right)
    \end{align*}
    if $\eps$ is chosen small enough to have $(b'_{\theta}+\log 2)\eps < \theta/2$. Finally, the sum $\sum_{k_1=\eps^{-1}}^{+\infty} k_1^3 \exp \left( -\frac{\theta k_1}{2} \right)$ goes to $0$ as $\eps \to 0$, so we can choose $\eps$ such that it is smaller than $\eta^2$, which concludes the proof by the Markov inequality.
\end{proof}

\begin{proof}[Proof of Theorem~\ref{thm_dist}]
    We first notice that we only need to prove the tightness of
    \begin{equation}\label{eqn:tightness_3_points}
    d(x_n, y_n)-d(x_n,u_n)
    \end{equation}
    and then apply it to the triples $(x_n,y_n,u_n)$ and $(u_n,x_n,v_n)$ (up to replacing the constant $M(\eta)$ by $2M(\eta/2)$). For this, the argument will be pretty similar to the proof of Lemma~\ref{lem_ball_growth} and Theorem~\ref{thm_diameter}: we will find $M$ such that if $\left| B_r(x_n) \right| \geq 2\eta n$, then $\left| B_{r+M}(x_n) \right| \geq (1-\eta)2n$.
    
    For this, let $\eta>0$. By Proposition~\ref{prop_isolated_points}, let $\eps>0$ be such that with probability at least $1-\eta$, the triangulation $\bT$ contains at most $\eta n$ isolated faces. If this event occurs, let $r$ be such that $2\eta n \leq \left| B_r(x_n) \right| \leq (1-\eta)2n$. Let $\mathcal{A}$ (resp. $\mathcal{B}$) be the set of connected components $c$ of $\bT \backslash B_r(x_n)$ such that $|\partial c| \geq \eps |c|$ (resp. $|\partial c| < \eps |c|$). Let also $|\mathcal{A}|$ (resp. $|\mathcal{B}|$) be the total number of internal faces of components of $\mathcal{A}$ (resp. $\mathcal{B}$). Then we have
    \[ \left| \partial B_r(x_n) \right| \geq \eps \left| \mathcal{A} \right| = \eps \left( 2n-\left| B_r(x_n) \right| - \left| \mathcal{B} \right| \right) \geq \eps \left( 2\eta n - \eta n \right), \]
    because all the internal faces of $\mathcal{B}$ are $\eps$-isolated. By Lemma~\ref{lem:ball_r+1}, it follows that
    \[ \left| B_{r+1}(x_n) \right| \geq \left| B_r(x_n) \right| + \frac{\eps \eta}{3} n. \]
    In particular, if the random variable $R_n$ denotes the smallest radius $r$ such that $\left| B_r(x_n) \right| \geq 2\eta n$, this implies $|B_{R_n+M} (x_n)| \geq (1-\eta)2n$ for $M=\frac{6}{\eps \eta}$. In this case, we know that $B_{R_n-1}(x_n)$ contains at most $2\eta n$ internal faces, hence at most $6\eta n$ vertices. The same is true for $\bT \backslash B_{R_n+M}(x_n)$. Therefore, for $n$ large enough, using that $y_n$ and $u_n$ are uniform, we have
    \begin{align*}
    \P \left( d(x_n,y_n)-d(x_n,u_n)>M \right) & \leq \P \left( \iso \geq \eta n \right)\\
    & + \P \left( \iso \leq \eta n \mbox{ and } y_n \notin B_{R_n+M}(x_n) \right)\\
    & + \P \left( u_n \in B_{R_n-1}(x_n) \right)\\
    & \leq \eta + \frac{6\eta n}{n+2-2g_n} + \frac{6\eta n}{n+2-2g_n}\\
    & \leq \frac{14 \eta}{1-2\theta},
    \end{align*}
    where $M$ depends only on $\eta$ and $\theta$. This proves the tightness of~\eqref{eqn:tightness_3_points}, and the Theorem.
\end{proof}

\section{Existence of local tentacles and Cheeger constant}
\label{sec:tentacles}

In this section, we will prove the existence of long (i.e. of logarithmic size) path-like objects that we call tentacles. This will entail that the Cheeger constant is $O\left(\frac{1}{\log n}\right)$ and that the diameter differs from the typical distances.

Let us start with a precise definition of the tentacles we will be interested in. Given an edge in a triangulation, a \emph{$k$-insertion} consists in opening this edge into a $2$-gon, and triangulating this $2$-gon by a sequence of $k-1$ edges and $k$ paths of length $2$ as in Figure~\ref{fig:tentacles}, for some $k\geq 1$. We authorize this insertion even if the edge is a loop.
We will call \emph{tentacle} a triangulation that can be obtained recursively by repeated such insertions, starting from a given edge. Note that a $k$-insertion creates $2k$ new inner edges in which an insertion can be performed. Therefore, it is clear inductively that tentacles with $2n$ faces are in bijection with even rooted plane trees with $2n$ edges, where an even rooted plane tree is a rooted plane tree in which each vertex has an even number of children.

\newcommand{\bC}{\mathbf{C}_{2n,g_n}}
A \emph{maximal tentacle} in $\bT$ is a tentacle that is maximal for inclusion. The boundary of this tentacle is a two-gon, which is possibly degenerated (i.e. the two vertices of the two-gon may be the same). We let $T_n$ be the number of maximal tentacles of $\bT$ and $M_n$ be their total number of internal faces (in particular $M_n$ is even).
We also let $\bC$ be the \emph{core} of $\bT$, obtained from $\bT$ by removing all maximal tentacles of $\bT$ and closing each of the remaining $2$-gons into an edge. Note that each maximal tentacle corresponds to one edge of the core. In particular, a maximal tentacle can consists of a single edge of $\bT$ if that edge is not included in a larger tentacle.

\begin{lem}\label{lemma:counttentacles}
	There are constants $t_{\theta}, m_{\theta}>0$ such that with high probability
	$$
	T_n > t_\theta n \quad \mbox{and} \quad M_n > m_\theta n.
	$$
\end{lem}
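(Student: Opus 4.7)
The plan is to treat the two inequalities separately: the bound on $T_n$ is in fact deterministic and follows from Euler's formula applied to the core, while the bound on $M_n$ requires a first- and second-moment computation on a local pattern. Since each tentacle is a planar disk glued to the rest of $\bT$ along a $2$-gon, the core $\bC$ has the same genus $g_n$ as $\bT$ and is itself a triangulation, with $F(\bC)=2n-M_n$ triangular faces and $E(\bC)=T_n=3n-\tfrac32 M_n$ edges. Euler's formula gives $V(\bC)=n+2-2g_n-\tfrac12 M_n$, so the condition $V(\bC)\geq 1$ forces $M_n\leq 2(n+1-2g_n)$, and hence $T_n\geq 6g_n-3\geq 5\theta n$ for $n$ large, handling the $T_n$ claim with $t_\theta=5\theta$.

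For the bound on $M_n$, I would call \emph{pattern vertex} any $v\in V(\bT)$ of degree $2$ whose two distinct neighbors $a\neq b$ are joined by two parallel edges, so that $v$ is the interior vertex of a $1$-insertion in $\bT$. Every such $v$ lies strictly inside a unique maximal tentacle, whose internal faces include the two triangles incident to $v$; moreover two distinct pattern vertices $v_1,v_2$ cannot share a triangle $T$, since $v_1$ would then have both its edges inside $T$, forcing its pattern condition to require $2$ parallel edges between the remaining two vertices of $T$, which is incompatible with the degree-$2$ assumption on $v_2$. Writing $X$ for the number of pattern vertices, we obtain $M_n\geq 2X$, and it suffices to prove $X\geq\kappa n$ w.h.p.\ for some $\kappa(\theta)>0$. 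Contracting the $2$-gon around $v$ (deleting $v$ and identifying its two incident edges) provides, up to $O(1)$ root-transfer corrections, a bijection between pairs $(t,v)$ with $t\in\T(2n,g_n)$ and $v$ a pattern vertex, and triples $(t',e,s)$ with $t'\in\T(2n-2,g_n)$, $e\in E(t')$ and $s\in\{+,-\}$ a side choice. Lemma~\ref{lem_ratio} then yields
\[ \E[X]\;\sim\;6n\cdot\frac{\tau(n-1,g_n)}{\tau(n,g_n)}\;\sim\;6\lambda(\theta)\,n. \]

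An analogous contraction applied to ordered pairs of pattern vertices with disjoint $2$-gons produces tuples $(t'',e_1,e_2,s_1,s_2)$ with $t''\in\T(2n-4,g_n)$, and two applications of Lemma~\ref{lem_ratio} give $\E[X(X-1)]\sim 36\lambda(\theta)^2 n^2=(1+o(1))\E[X]^2$; pairs with overlapping $2$-gons contribute only $O(n)$, since after double contraction the two marked edges must be adjacent in $t''$. Chebyshev's inequality then gives $X\geq\E[X]/2$ with high probability, so $M_n\geq 2X>m_\theta n$ w.h.p.\ for some $m_\theta(\theta)>0$. The main technical point is the sharpness of the second-moment computation: we must verify that the leading constant in $\E[X(X-1)]$ exactly matches the square of the leading constant in $\E[X]$, and that overlapping patterns are correctly controlled. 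A more robust alternative that avoids second moments is to invoke the local convergence $\bT\to\mathrm{PSHT}_\theta$ from~\cite{BL19}: combined with Theorem~\ref{thm_diameter}, which ensures that two uniformly random vertices of $\bT$ sit at distance $\Theta(\log n)$ and hence have asymptotically independent neighborhoods, a Benjamini--Schramm-type ergodicity argument yields concentration of $X/n$ around the (positive) PSHT density of pattern vertices.
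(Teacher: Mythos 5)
Your proof is correct, and it takes a genuinely different route for the $T_n$ bound while using a simpler pattern for the $M_n$ bound.

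For $T_n$, you use a deterministic Euler-formula argument on the core: since $T_n=E(\bC)=3n-\tfrac32 M_n$ and $V(\bC)=n+2-2g_n-\tfrac12 M_n\geq 1$, you get $T_n\geq 6g_n-6$, which holds for \emph{every} triangulation in $\T(2n,g_n)$, not just with high probability. The paper instead obtains the lower bound on $T_n$ probabilistically, via a second-moment count of copies of a carefully chosen $10$-face pattern $A$ (shown in Figure~\ref{fig:tentacles}) designed so that the copy of $B_4$ inside each occurrence of $A$ is forced to be a \emph{maximal} tentacle by a parity-of-degree argument; disjointness of the $B_4$'s then gives $T_n\geq N_A$ and $M_n\geq 6N_A$ simultaneously. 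Your approach is cleaner here: the deterministic Euler bound sidesteps the need to force maximality of a specific subtentacle, which is the one place where the paper's pattern choice is delicate.

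For $M_n$, both you and the paper use a first-and-second-moment argument with Lemma~\ref{lem_ratio}, and your counting of "pattern vertices" (degree-$2$ vertices with two parallel edges joining their two distinct neighbours) is simpler than the paper's $A$-pattern. Your observation that two distinct pattern vertices cannot share a triangle is correct and what makes $M_n\geq 2X$ work. Two small points to check before fully trusting the constants: (i) the leading constant in $\E[X]$ looks like $3\lambda(\theta)$ rather than $6\lambda(\theta)$ — the contraction is a bijection (up to root transfers) with pairs $(t',e)$ where $e$ is an unoriented edge, and I don't see a genuine side choice $s\in\{+,-\}$; this does not affect the conclusion since the first and second moments must be computed consistently. (ii) Both your approach and the paper's implicitly rely on the maximal tentacles being pairwise disjoint so that the core is well-defined and has the same genus; the paper does not prove this either, so you are in good company, but it is worth noting. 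Your alternative route via local convergence to the PSHT is a plausible third option, though turning Benjamini--Schramm convergence into a concentration statement would itself require an argument (local convergence only gives convergence of $\E[X/n]$, not concentration of $X/n$ without an extra ingredient, so the second-moment method is not really avoidable this way).
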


\begin{proof}
    This will follow from a second moment method on a specific kind of tentacle. More precisely, let $A$ be the triangulation of the $2$-gon with $10$ faces shown in Figure~\ref{fig:tentacles}, and let $N_A$ be the number of copies of $A$ in $\bT$. We highlight that in $N_A$, we also count the occurrences of $A$ where its two boundary vertices are the same. 
    We now assume $n>20$, and note that a triangulation with $2n$ faces with a marked copy of $A$ which does not contain the root edge can be transformed into a triangulation with $2n-10$ faces and a marked edge which is not the root. This transformation is bijective (the inverse operation consists in splitting the marked edge open and inserting $A$, which works even if the marked edge is a loop). Therefore, it follows directly from Lemma~\ref{lem_ratio} that
	$\mathbb{E}[N_A/n] = \frac{3\tau(n-5,g) (1+O(\frac{1}{n}))}{\tau(n,g)}\rightarrow 3 \lambda(\theta)^5,
	$
	where the error term $O(\frac{1}{n})$ accounts for the case where the root edge is in a copy of $A$.
Using the same argument with two marked copies of $A$, we have similarly, 
	$\mathbb{E}[(N_A/n)^2] \rightarrow \frac{9\tau(n-10,g) (1+O(\frac{1}{n}))}{\tau(n,g)}\rightarrow 9 \lambda(\theta)^{10}$.
	It follows directly by the Chebyshev inequality that $N_A> \frac{3}{2}\lambda(\theta)^5n$ w.h.p.. Finally, we note that the copy of $B_4$ (see Figure~\ref{fig:tentacles}) lying inside each copy of $A$ is necessarily a maximal tentacle\footnote{for example because in a tentacle all internal vertices have even degree, whereas the two outer vertices of the $B_4$ inside $A$ have degrees $5$ and $7$} and that those tentacles are disjoint. It follows that $T_n\geq N_A$ and that $M_n\geq 6N_A$,
    and the lemma follows.
\end{proof}

\begin{figure}
	\centering
	\includegraphics[width=\linewidth]{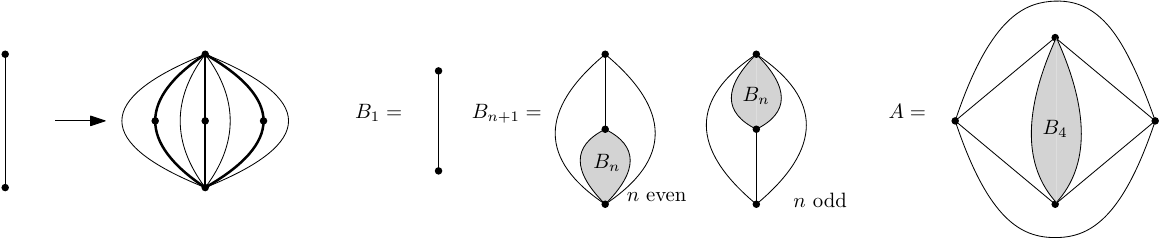}
	\caption{Left: a $k$-insertion, with here $k=3$. The $2k$ fat edges are the ones on which a new insertion can be performed recursively in the creation of tentacles. Center: The "ladder" tentacle $B_n$. Right: The map $A$ used in the proof of Lemma~\ref{lemma:counttentacles}. The copy of $B_4$ inside it is a maximal tentacle.
	}
	\label{fig:tentacles}
\end{figure}

We will now prove that some of the tentacles of $\bT$ are exceptionally long. For each $i\geq 1$ we define $B_i$ as the "ladder" tentacle shown on Figure~\ref{fig:tentacles}. 

\begin{lem}\label{lemma:largeladder}
	There is $s_\theta>0$ such that, w.h.p., one of the maximal tentacles of $\bT$ contains $B_{\ell}$ for $\ell=\lfloor s_\theta \log(n) \rfloor$.
\end{lem}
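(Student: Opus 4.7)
The plan is to adapt the second-moment method used in Lemma~\ref{lemma:counttentacles}, replacing the fixed pattern $A$ (which contains $B_4$) by an analogous pattern $A_\ell$ of size $2\ell + O(1)$ that contains $B_\ell$ and forces this copy of $B_\ell$ to be a maximal tentacle in $\bT$. Concretely, I would build $A_\ell$ by taking a copy of $B_\ell$ and attaching, at its two boundary vertices, a constant-size cap that gives those vertices odd degree in $A_\ell$, thereby preventing the $B_\ell$ from being extended as a tentacle (using the same parity obstruction exploited for $A$: all internal vertices of any tentacle have even degree, cf.\ the footnote of Lemma~\ref{lemma:counttentacles}). Let $N_{A_\ell}$ denote the number of copies of $A_\ell$ in $\bT$.

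The first-moment computation is exactly parallel to the one for $N_A$: cutting out a marked copy of $A_\ell$ and closing the resulting $2$-gon into an edge defines an almost-injective map to rooted triangulations of $\T(2(n-\ell-O(1)), g_n)$ with a marked oriented edge. Iterating Lemma~\ref{lem_ratio} a total of $\ell + O(1)$ times -- a step justified because $\ell = O(\log n)$ ensures that the product of the $(1+o(1))$ factors remains of order $n^{o(1)}$ -- yields
\[
\E[N_{A_\ell}] \,=\, (1+o(1))\, C_\theta\, n\, \lambda(\theta)^{\ell + O(1)}
\]
for some constant $C_\theta>0$. Choosing $s_\theta$ strictly smaller than $1/\log(1/\lambda(\theta))$ guarantees that, for $\ell=\lfloor s_\theta \log n\rfloor$, the expectation satisfies $\E[N_{A_\ell}] \geq n^{\delta}$ for some $\delta=\delta(s_\theta)>0$.

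For the second moment, I would split $\E[N_{A_\ell}^2]$ according to whether the two marked copies of $A_\ell$ share at least one face. The contribution of disjoint pairs is estimated by removing both copies and applying the same iterated ratio bound $\tau(n - 2\ell - O(1), g_n)/\tau(n,g_n) \sim \lambda(\theta)^{2\ell + O(1)}$, so this piece contributes $(1+o(1))\,\E[N_{A_\ell}]^2$. The contribution of overlapping pairs is bounded by $O(\ell)\,\E[N_{A_\ell}]$, because once a first copy is fixed there are only $O(\ell)$ positions where a second copy can share at least one face. Since $\E[N_{A_\ell}]$ is polynomially large while $\ell=O(\log n)$, this is $o(\E[N_{A_\ell}]^2)$, and Chebyshev's inequality yields $N_{A_\ell}\geq 1$ with high probability. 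Any such copy of $A_\ell$ exhibits a $B_\ell$ sitting inside a maximal tentacle of $\bT$, which is the desired conclusion.

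The main obstacle I anticipate is the explicit construction of $A_\ell$: one must check that a fixed-size cap exists which is compatible with the boundary structure of $B_\ell$ and produces odd local degrees preventing further tentacle extension from either end. The extension of the enumerative estimates from the $\ell=4$ case of Lemma~\ref{lemma:counttentacles} to $\ell$ of logarithmic size is then routine, once one controls the accumulation of the $(1+o(1))$ factors through Lemma~\ref{lem_ratio}; this is precisely what the \emph{uniformity} in $g/n$ of that lemma (as emphasized in the remark following it) affords us.
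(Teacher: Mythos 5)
Your overall plan is natural, but it runs into a quantitative obstacle that the paper deliberately avoids, and this obstacle is genuine.

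The second-moment method requires $\E[N_{A_\ell}^2]/\E[N_{A_\ell}]^2 \to 1$ (or at least, after subtracting the overlap contribution, that the disjoint-pair count is $(1+o(1))\E[N_{A_\ell}]^2$). Tracing the removal bijections, this ratio for the disjoint part is essentially
\[
\frac{\tau(n-2\ell',g_n)\,\tau(n,g_n)}{\tau(n-\ell',g_n)^2}
\;=\;\prod_{i=1}^{\ell'}\frac{1+\eps(n-\ell'-i)}{1+\eps(n-i)},
\]
where $\ell'=|A_\ell|/2$ and $\eps(\cdot)$ is the (uniform) error in Lemma~\ref{lem_ratio}. This product is $\exp\!\big(O(\ell'\cdot\eps(n))\big)$. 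Since $\ell'=\Theta(\log n)$ and Lemma~\ref{lem_ratio} provides only $\eps(n)=o(1)$ with no rate, there is no reason for $\ell'\eps(n)\to 0$; the ratio could diverge. Your own first-moment estimate already silently uses the same cancellation: the statement $\E[N_{A_\ell}]=(1+o(1))C_\theta n\,\lambda(\theta)^{\ell+O(1)}$ should really be $n\,\lambda(\theta)^{\ell}e^{o(\ell)}$, which is harmless for establishing $\E[N_{A_\ell}]\ge n^{\delta}$, but the analogous $e^{o(\ell)}$ factor in the variance is \emph{not} harmless, because Chebyshev needs the multiplicative error to tend to $1$, not merely to be $n^{o(1)}$. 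The paper explicitly notes that the available asymptotics ``are not written precisely enough'' to give rates, so this gap cannot be closed by citing Lemma~\ref{lem_ratio} more carefully; one would need a genuinely stronger enumeration input (e.g.\ $\eps(n)=o(1/\log n)$ uniformly in $g/n$), which is not established.

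The paper's proof of Lemma~\ref{lemma:largeladder} circumvents this by keeping the enumerative input at \emph{constant} pattern size: the second moment is used only in Lemma~\ref{lemma:counttentacles}, on the fixed $10$-face pattern $A$, to get linearly many maximal tentacles with linearly many faces. From there, one conditions on the core $\bC$ and on $(T_n,M_n)$, under which the multiset of tentacles is \emph{exactly} uniform among families of $T_n$ tentacles with $M_n$ total internal faces. Via the bijection with even plane forests and the Lukasiewicz encoding, the tentacle family becomes a conditioned lattice walk with i.i.d.\ steps, and the appearance of the pattern $w_\ell$ (encoding $B_\ell$) is handled with an exact blockwise computation plus the Kemperman/cycle-lemma bound on the conditioning probability. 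This gives full control for $\ell=\Theta(\log n)$ without ever iterating the ratio estimate a growing number of times. Your cap construction of $A_\ell$ to force maximality is fine in spirit and parallels the parity obstruction used for $A$, but the enumerative step is where the argument would have to change: either obtain a quantitative version of Lemma~\ref{lem_ratio}, or route the logarithmic-length estimate through an exact combinatorial model as the paper does.
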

\begin{proof}
    In all this proof, we will reason conditionally on $\left( T_n, M_n, \bC \right)$. We denote by $(e_1, \dots, e_{T_n})$ the non-root edges of $\bC$ and by $t_i$ the tentacle that fills $e_i$ in $\bT$. Then the family $(t_i)$ is uniform among all families of $T_n$ tentacles with $M_n$ internal faces in total.
    
	We write $T_n=an$, $M_n=2bn$. By Lemma~\ref{lemma:counttentacles} we can assume that $a$ and $b$ are bounded away from zero. Families of $an$ tentacles with a total of $2bn$ faces are in bijection with forests of $an$ even rooted plane trees with a total of $2bn$ edges. By the Lukasiewicz encoding (see e.g.~\cite[L.~5.3.9]{Sta99}), these objects are in bijection with walks with steps $+2$ and $-1$ of total length $(a+3b)n$ ending at position $-an$, and staying above $-an$ before their last point. Note that such walks have $bn$ steps $+2$ and $(a+2b)n$ steps $-1$. We denote by $(V_{i})_{1 \leq i \leq (a+3b)n}$ a random walk picked uniformly among such walks.

	Now let also $(W_{i})_{1 \leq i \leq (a+3b)n}$ be a random walk of length $(a+3b)n$ where each step has increment $+2$ with probability $\frac{b}{a+3b}$ and $-1$ with probability $\frac{a+2b}{a+3b}$, independently. By standard binomial estimates such a walk ends at $-an$ with probability $\Omega(n^{-1/2})$, and by the cycle lemma, conditionnally on this, the probability that it reaches $-an$ for the first time at time exactly $(a+3b)n$ is exactly $\frac{a}{a+3b}>0$ (this is the Kemperman formula). It follows that $V$ has the law of $W$ conditioned on an event of probability $\Omega(n^{-1/2})$.
 
    For $\ell \geq 1$, we now denote by $w_{\ell}$ the Lukasiewicz encoding of the pattern $B_{\ell}$. For $\ell$ odd, this word consists of $\frac{\ell-1}{2}$ times the steps "$+2,+2,-1$" followed by $\frac{3\ell-1}{2}$ steps of $-1$. In particular, it has length $3\ell-2$ with $\ell-1$ steps of $+2$ and $2\ell-1$ steps of $-1$.
	Splitting the $(a+3b)n$ steps of $W$ into intervals of length $3\ell-2$, we get that the probability that $W$ does not contain $w_{\ell}$ is at most	
	\begin{equation}\label{eqn:bound_pattern_lukasiewicz}
    \left( 1-\left( \frac{b}{a+3b} \right)^{\ell-1} \left( \frac{a+2b}{a+3b} \right)^{2\ell-1} \right)^{\lfloor \frac{(a+3b)n}{3\ell-2} \rfloor}\leq \exp \left( -\frac{c'n}{\ell} c^{3\ell} \right)
	\end{equation}
	with $c=\tfrac{a+2b}{a+3b}<1$ and $c'>0$. Taking $\ell$ odd such that $|\ell-s_\theta \log(n)| \leq 1$ with $s_\theta < \frac{1}{3\log(1/c)}$, the quantity in the exponential diverges polynomially in $n$, so~\eqref{eqn:bound_pattern_lukasiewicz} goes to zero much faster than $\Omega(n^{-1/2})$. Therefore, with high probability the walk $V$ contains the pattern $w_{\ell}$. Moreover, the form of the word $w_{\ell}$ implies that its occurrence in $V$ is contained in one excursion of $V$ above its running minimum, so it must correspond to a part of one of the tentacles $t_i$. In particular, one of those tentacles contains $B_{\ell}$.
\end{proof}

\begin{rem}
It may seem at first sight that our notion of tentacle is unnecessarily complicated, and that we could have just defined a tentacle as a copy of $B_{\ell}$ for some $\ell$. The reason why we could not do so is that this does not allow a proper "core and tentacles" decomposition: the triangulation obtained after removing all the copies of $B_{\ell}$ for all $\ell$ may still contain copies of $B_{\ell}$, but then if we condition on the core, we know that nontrivial tentacles must be glued on them. The class of tentacles that we consider is the smallest class which avoids this problem. On the other hand, defining a tentacle as a planar triangulation delimited by a digon would have been natural, but the proof of Lemma~\ref{lemma:largeladder} would then require nontrivial enumerative estimates. 
\end{rem}
We are now able to estimate the Cheeger constant.

\begin{proof}[Proof of Theorem~\ref{thm_cheeger}]
    The upper bound follows immediately from Lemma~\ref{lemma:largeladder}, as the copy of $B_{\ell-2}$ inside of $B_{\ell}$ has $\ell-1$ vertices but only $3$ boundary edges.

    On the other hand, the lower bound will be a straight consequence of Theorem~\ref{thm_isoperimetric}. Let $V=V_1 \cup V_2$ be a partition of the vertices of $\bT$. We want to prove
    \begin{equation}\label{eqn:cheeger_goal}
    |\partial_{\mathrm{Ver}} V_1| \geq \frac{c_{\theta}}{\log n} \min \left( |V_1|, |V_2| \right)
    \end{equation}
    for some $c_{\theta}>0$ which depends only on $\theta$. We denote by $V_1^i$ the vertex-connected components of $V_1$. Moreover, for all $i$, we denote by $F_1^i$ the set of those faces of $\bT$ which have $2$ or $3$ of their vertices in $V_1^i$. Up to exchanging the roles of $V_1$ and $V_2$, we may assume $|F_1^i|\leq n$ for all $i$.

    Now for any edge $e$ such that exactly one side of $e$ is in $F_1^i$, at least one endpoint of $e$ belongs to $V_1^i$ and at least one does not, which means that $|\partial_{\mathrm{Ver}} V_1^i| \geq |\partial F_1^i|$. Moreover, by Theorem~\ref{thm_isoperimetric}, any face-connected component $F_1^{i,j}$ of $F_1^i$ satisfies either $|F_1^{i,j}|\leq K_{\theta} \log n$ or $|\partial F_1^{i,j}| \geq \delta_{\theta}|F_1^{i,j}|$ (because $|F_1^{i,j}| \leq |F_1^i| \leq n$). Summing over $j$, we get
    \[ |\partial_{\mathrm{Ver}} V_1^i| \geq |\partial F_1^i| \geq \min \left( \delta_{\theta},\frac{K_{\theta}^{-1}}{\log n} \right) |F^1_i|. \]
    Moreover, we recall that $V_1^i$ is connected so if $|V_1^i| \geq 2$, the number of edges with both endpoints in $V_1^i$ is at least $|V_1^i|/2$. Each of those is incident to at least one face of $F_1^i$, which implies $|F_1^i| \geq |V_1^i|/6$, so
    \begin{equation}\label{eqn_cheeger_connected_component}
    |\partial_{\mathrm{Ver}} V_1^i| \geq \frac{\min(\delta_{\theta}, K_{\theta}^{-1})}{6\log n} |V^1_i|.
    \end{equation}
    This last inequality is also true if $|V_1^i|=1$, so we can finally sum~\eqref{eqn_cheeger_connected_component} over $i$ to obtain~\eqref{eqn:cheeger_goal}.
\end{proof}

Note that it is also an easy consequence of Theorem~\ref{thm_isoperimetric} and Lemma~\ref{lemma:largeladder} that the Cheeger constant of the \emph{dual} graph of $\bT$ is also of order $\frac{1}{\log n}$ with high probability.

\section{Conjectures on optimal constants}\label{sec:conjectures}

Theorems~\ref{thm_diameter} and~\ref{thm_isoperimetric} give a meaning to the "hyperbolic" nature of $\bT$ at a global scale. However, we have not tried to obtain sharp values for the constants in these results, and we expect that doing so should be a difficult problem. On the other hand, it is possible to conjecture what the optimal constants in Theorems~\ref{thm_diameter} and~\ref{thm_dist} should be. We recall from~\cite{BL19} that the parameter $h_{\theta} \in \left( 0,\frac{1}{4} \right)$ is linked to $\theta$ by the formula
\[ \frac{1-2\theta}{6} = \frac{h_{\theta} \log \frac{1+\sqrt{1-4h_{\theta}}}{1-\sqrt{1-4h_{\theta}}}}{(1+8h_{\theta})\sqrt{1-4h_{\theta}}}. \]
Moreover, following~\cite{B18}, we write
\[ m_{\theta}=\frac{1-2h_{\theta}-\sqrt{1-4h_{\theta}}}{2h_{\theta}} \in (0,1). \]
In particular, the rate of exponential volume growth of the ball of radius $r$ in the local limit of $\bT$ is $m_{\theta}^{-r}$. We can now formulate precise conjectures.
\begin{conj}\label{conj:optimal_constants}
Let $x_n, y_n$ be two uniform, independent vertices of $\bT$. Then we have the convergences in probability
\begin{equation}\label{eqn:convergence_d1_d2}
\frac{1}{\log n} d_{\bT}(x_n,y_n) \xrightarrow[n \to +\infty]{P} D_{\theta} \quad \mbox{and} \quad \frac{1}{\log n} \text{diam}(\bT) \xrightarrow[n \to +\infty]{P} D'_{\theta},
\end{equation}
where
\[ D_{\theta}=\frac{1}{\log(m_{\theta}^{-1})} \quad \mbox{and} \quad D'_{\theta}=\frac{3}{\log (m_{\theta}^{-1})}. \]
Moreover, if we denote by $R_{\mathrm{plan}}(x_n)$ the planarity radius around $x_n$, i.e. the largest $r$ such that the ball $B_r(x_n)$ is planar (see~\cite{Louf}), then
\[ \frac{1}{\log n} R_{\mathrm{plan}}(x_n) \xrightarrow[n \to +\infty]{P} \frac{1}{2}D_{\theta}. \]
\end{conj}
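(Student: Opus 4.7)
The plan is to combine a sharp, uniform estimate for the volume growth of balls in $\bT$ with a refined version of the arguments of Sections~\ref{sec:iso} and~\ref{sec:dist}. The starting point is the local convergence of $\bT$ towards the planar stochastic hyperbolic triangulation $\mathbf{T}_{\infty}$ of parameter $\lambda(\theta)$ proved in~\cite{BL19}: peeling computations in $\mathbf{T}_{\infty}$ (see~\cite{B18}) give $\frac{1}{r}\log|B_r(\rho)| \to \log(m_{\theta}^{-1})$ almost surely, which is exactly the rate appearing in $D_{\theta}$. The first step would be to quantify this limit and transfer it uniformly to $\bT$: with high probability, for every vertex $x$ of $\bT$ and every $r \leq (D_{\theta}-\eps)\log n$, one should have $|B_r(x)| = m_{\theta}^{-r(1+o(1))}$, with similar control on $|\partial B_r(x)|$. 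This seems within reach via peeling on $\bT$ itself and a union bound over vertices, exploiting the fact that peeling explores ball-by-ball in a Markovian manner matching $\mathbf{T}_{\infty}$ up to scale $o(\log n)$.

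For the typical distance, the upper bound in~\eqref{eqn:convergence_d1_d2} then follows essentially as in the proof of Theorem~\ref{thm_diameter}: balls of radius slightly above $\frac{1}{2}D_{\theta}\log n$ around two independent uniform vertices have volumes of order $\sqrt n$, after which Theorem~\ref{thm_isoperimetric} forces them to reach linear volume in $o(\log n)$ extra steps, so they must meet at distance $D_{\theta}\log n\,(1+o(1))$. For the matching lower bound, the naive count of simple paths used in Proposition~\ref{prop_lower_bound} yields only $\log n/(2\log(1/\lambda(\theta)))$, which is not sharp. One would refine it by running a first-moment argument on pairs $(x_n,y_n,\gamma)$ where $\gamma$ is a \emph{geodesic} between $x_n$ and $y_n$, recognising that slitting along a shortest path produces not a generic triangulation with a $2\ell$-boundary but one in which the two sides of the boundary are themselves geodesic; enumerating such structures should replace the rate $\lambda(\theta)^{-1}$ by the true local growth rate $m_{\theta}^{-1}$.

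The hardest part, and in my view the main obstacle, is the diameter. The heuristic behind $D'_{\theta}=3D_{\theta}$ is that the diameter is achieved between the tips of two very long tentacles in the sense of Section~\ref{sec:tentacles}: if $L_{\max}$ denotes the length of the longest tentacle of $\bT$, then the diameter should behave as $2L_{\max}+D_{\theta}\log n$, and the conjecture amounts to proving $L_{\max} = D_{\theta}\log n\,(1+o(1))$ with high probability. This would require sharpening Lemma~\ref{lemma:largeladder} into a matching upper bound: on the uniform forest of tentacles of total size $\Theta(n)$ attached to the core, one must show that the longest simple-path pattern has logarithmic depth with the precise exponent $1/\log(m_{\theta}^{-1})$, via an extremal analysis of the Lukasiewicz encoding rather than the crude one used in Lemma~\ref{lemma:largeladder}, combined with a ruling-out of any longer "diameter-realising" subgraph outside the tentacles. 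The planarity radius statement should be more accessible: in $\mathbf{T}_{\infty}$ a ball becomes non-planar as soon as two boundary edges are identified through a handle, an event of conditional probability of order $|\partial B_r(x_n)|^2/n$ per peeling step; since $|\partial B_r(x_n)| \asymp m_{\theta}^{-r}$ by Step~1, the first non-planar handle appears precisely when $m_{\theta}^{-r}\asymp \sqrt n$, yielding $R_{\mathrm{plan}}(x_n)\sim \frac{1}{2}D_{\theta}\log n$.
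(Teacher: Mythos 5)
The statement you are trying to prove is explicitly labelled a \emph{conjecture} in the paper; the authors do not claim a proof, and in the surrounding text they state that obtaining the sharp constants ``should be a difficult problem.'' What the paper does offer is the same set of heuristics that you have rediscovered: $D_\theta$ from extrapolating the exponential growth rate $m_\theta^{-1}$ of balls in the local limit, $D'_\theta - D_\theta$ from twice the length of the longest tentacle, the tentacles expected to behave like i.i.d.\ Boltzmann triangulations of weight $\lambda(\theta)$ with depths governed by the Krikun decomposition of~\cite{B18}, and the planarity radius from the $\sqrt{n}$-vertex birthday-type heuristic. The only rigorous result the paper extracts in this direction is Proposition~\ref{prop:d1_not_d2}, which shows $\text{diam}(\bT) - d_{\bT}(x_n,y_n) \geq t_\theta \log n$ w.h.p.\ (hence $D'_\theta > D_\theta$ \emph{if} the limits in~\eqref{eqn:convergence_d1_d2} exist), using the logarithmic tentacle of Lemma~\ref{lemma:largeladder} together with the exchangeability of tentacle attachments.

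Your outline is therefore not a proof but a plan, and you yourself flag the decisive gaps. Concretely: (1) the step ``transfer the local growth rate uniformly to all $n$ vertices of $\bT$ up to depth $(D_\theta - \eps)\log n$'' is a large-deviations statement about the peeling process that is not a corollary of the local limit of~\cite{BL19} --- local convergence controls $O(1)$ scales, not $\Theta(\log n)$ scales, and the union bound over $n$ vertices requires exponentially small failure probability per vertex, which is not established; (2) the refined lower bound on typical distances via ``enumerating triangulations with a geodesic boundary'' is, to my knowledge, an open enumerative problem even in the planar Boltzmann case, so replacing $\lambda(\theta)^{-1}$ by $m_\theta^{-1}$ in Proposition~\ref{prop_lower_bound} is not a routine refinement; (3) the upper bound on the diameter requires not only a sharp tail for the depth of the uniform tentacle forest but also a proof that no \emph{other} subconfiguration of the core contributes $\Theta(\log n)$ to the diameter, which goes beyond sharpening Lemma~\ref{lemma:largeladder}; and (4) the planarity radius argument implicitly assumes the boundary behaves like a uniform set of edges, which is a heuristic. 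So the approach is aligned with the paper's thinking, but none of the four pieces is currently a proof, and the paper itself makes no stronger claim than what you have written.
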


The conjecture for $D_{\theta}$ comes from extrapolating to larger scales the rate of exponential growth of balls at the local scale in $\bT$. Moreover, we believe that $D'_{\theta}-D_{\theta}$ is given by twice the length of the longest tentacle of $\bT$. We also expect that the tentacles should behave roughly like i.i.d. Boltzmann planar triangulations with Boltzmann weight $\lambda(\theta)$ given by~\cite{BL19}. A description of distances to the root in such triangulations is given in~\cite{B18} using the Krikun decomposition, which is where the conjecture for $D'_{\theta}$ comes from. Finally, our conjecture for the planarity radius comes from the analogy with random graphs, where non-planarity appears when more than $\sqrt{n}$ vertices have been explored.

Finally, it may sound surprising to expect $D'_{\theta}>D_{\theta}$, as it is not the case for $3$-regular graphs~\cite{BFdlV}. However, this is a common behaviour as soon as tentacles are not prohibited by the local structure of the graph. For example, the giant component of a supercritical Erd\"os-R\'enyi random graph exhibits the same behaviour~\cite{RW10}, for the same reason as here. This was also already conjectured in~\cite{Ray13a} for unicellular maps. By the same argument as the lower bound of Theorem~\ref{thm_cheeger} (existence of logarithmic tentacles), we are able to prove that if~\eqref{eqn:convergence_d1_d2} is true, then $D'_{\theta}>D_{\theta}$. 

\begin{prop}\label{prop:d1_not_d2}
	There exists $t_\theta>0$ such that if $x_n,y_n$ are the starting points of two independent uniform oriented edges in $\bT$, then
	$$
	\text{diam}(\bT)-d_{\bT}(x_n,y_n) \geq t_\theta \log n
	$$
	with high probability.
\end{prop}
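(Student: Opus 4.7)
My plan is to exhibit, with high probability, a long tentacle in $\bT$ whose $2$-gon boundary $\{a,b\}$ lies within graph distance $\epsilon\log n$ of $x_n$, and then use its tip $v^*$ as a witness that the diameter exceeds $d_\bT(x_n,y_n)$ by a logarithmic amount. Let $t^*$ be a maximal tentacle containing the ladder $B_\ell$ with $\ell=\lfloor s_\theta\log n\rfloor$, and $v^*$ the tip of this ladder. Then $v^*$ lies at graph distance at least $c_1\ell$ from $\{a,b\}$, for some absolute constant $c_1>0$ coming from the ladder structure of $B_\ell$, and every path from $v^*$ to a vertex outside $t^*$ must exit through $\{a,b\}$; hence $d_\bT(v^*,w)\geq c_1\ell+d_\bT(\{a,b\},w)$ for every $w\notin t^*$. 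Since $|t^*|=O(\log n)$, with high probability $y_n\notin t^*$, so combining with the triangle inequality $d_\bT(\{a,b\},y_n)\geq d_\bT(x_n,y_n)-d_\bT(x_n,\{a,b\})$ yields
\[ \text{diam}(\bT)-d_\bT(x_n,y_n)\;\geq\; d_\bT(v^*,y_n)-d_\bT(x_n,y_n)\;\geq\; c_1\ell - d_\bT(x_n,\{a,b\})\;\geq\; c_1 s_\theta\log n -\epsilon\log n, \]
which is at least $t_\theta\log n$ with $t_\theta=c_1 s_\theta/2>0$, provided $\epsilon<c_1 s_\theta/2$.

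The task reduces to producing, with high probability, a long tentacle whose $2$-gon boundary lies within distance $\epsilon\log n$ of $x_n$. I would first strengthen Lemma~\ref{lemma:largeladder} via a second-moment argument on the number of disjoint occurrences of the Lukasiewicz pattern $w_\ell$ in the walk used in its proof: for $s_\theta>0$ small enough, this yields, with high probability, $\Omega(n^\alpha)$ disjoint maximal tentacles each containing a copy of $B_\ell$, where $\alpha$ can be made arbitrarily close to $1$ by choosing $s_\theta$ small. I would then show that these long tentacles are spread throughout $\bT$ sufficiently that at least one has its boundary inside the ball $B_{\epsilon\log n}(x_n)$ with high probability. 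This combines a polynomial lower bound $|B_{\epsilon\log n}(x_n)|\geq n^{\epsilon\log(1/m_\theta)-o(1)}$ at the intermediate scale (obtained by propagating the local convergence of $\bT$ to the PSHT of~\cite{BL19} together with Lemma~\ref{lem_ball_growth}) with an approximate-exchangeability argument showing that the locations of long tentacles are roughly uniform among the edges of the core $\bC$.

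The main obstacle is this density step. Lemma~\ref{lem_ball_growth} only controls ball growth for $r\geq K_\theta\log n$, so at the intermediate scale $r=\epsilon\log n$ one must either propagate local convergence to PSHT up to growing radii, or derive a direct enumerative lower bound on $|B_r(x_n)|$. The parameters $s_\theta,\alpha,\epsilon$ must then be jointly chosen so that $\alpha+\epsilon\log(1/m_\theta)>1$ (so that a ball of radius $\epsilon\log n$ typically contains the boundary of some long tentacle) while $\epsilon<c_1 s_\theta/2$ (so that the margin in our bound is positive); verifying the existence of such a choice of parameters is the technical heart of the argument.
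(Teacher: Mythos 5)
Your strategy is genuinely different from the paper's, and it has a gap that I do not think is merely technical. Your plan is \emph{local}: find a long tentacle whose base is \emph{close} to $x_n$ (within $\eps\log n$), so that its tip lies $c_1 s_\theta\log n - \eps\log n$ farther than $x_n$. This forces a delicate balance of three constants: the density of long tentacles ($n^{\alpha}$ with $\alpha\approx 1-3s_\theta\log(1/c)$), the volume growth of small balls ($|B_{\eps\log n}(x_n)|\approx n^{\eps\log(1/m_\theta)}$), and the depth-to-size ratio $c_1$ of the ladder. You need simultaneously $\eps\log(1/m_\theta)>3s_\theta\log(1/c)$ (so the ball contains a long-tentacle base) and $\eps<c_1 s_\theta/2$ (so the margin is positive), which forces the unconditional inequality $3\log(1/c)/\log(1/m_\theta)<c_1/2$. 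There is no reason this holds: $c=(a+2b)/(a+3b)$ is determined by the core/tentacle volume fractions and $m_\theta$ by the PSHT growth rate, and these are independent quantities; in particular as $\theta\to 0$ one has $m_\theta\to 1$ so the left side blows up. On top of this, you acknowledge yourself that the ball-growth estimate at scale $\eps\log n$ is not available (Lemma~\ref{lem_ball_growth} only starts at $K_\theta\log n$, and local convergence to the PSHT gives fixed radii only), and the ``approximate exchangeability'' spreading the long tentacles uniformly is also unproved. So even if the parameter constraint happened to hold, several nontrivial lemmas would still be needed.

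The paper takes a \emph{global} route that sidesteps all of this and needs none of the parameter juggling. The observation is that, conditionally on the core $\bC$ and the multiset of tentacles, the tentacles are glued uniformly at random onto the edges of $\bC$. Consequently, the base $p(\vec e_*)$ of the longest tentacle (the maximizer of the depth $H$) has the same conditional law as the base of a uniform oriented edge. So instead of trying to put a deep tentacle \emph{near} $x_n$, one shows that the base of the deepest tentacle is at a \emph{typical} distance from $x_n$ in the core: writing $d_{\bT}(\vec e_1,\vec e_2)=H(\vec e_1)+H(\vec e_2)+d_{\bC}(p(\vec e_1),p(\vec e_2))+O(1)$, one applies the already-proved Theorem~\ref{thm_dist} (extended to oriented edges) and Proposition~\ref{prop_isolated_points} (to bound $H$ for typical edges by $\log\log n$) to three independent uniform edges $\vec e_0,\vec e_1,\vec e_2$, deducing that $d_{\bC}(p(\vec e_0),p(\vec e_1))$ and $d_{\bC}(p(\vec e_0),p(\vec e_2))$ differ by $O(\log\log n)$, and then uses exchangeability to replace $\vec e_2$ by $\vec e_*$ in this estimate. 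This yields $\diam(\bT)\ge d_\bT(\vec e_0,\vec e_*)\ge d_\bT(\vec e_0,\vec e_1)+\tfrac{s_\theta}{2}\log n-O(\log\log n)$ with no constraint on $s_\theta$ and no intermediate-scale ball growth. The missing idea in your proposal is precisely this conditional-uniformity (exchangeability) of the tentacle gluing, which makes the deepest tentacle's base behave like a typical core edge, so there is nothing to localize.
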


\begin{proof}
    The idea of this proof is that the base of a large tentacle becomes a typical edge if the tentacle is removed. We first notice that Theorem~\ref{thm_dist} remains true if the uniformly chosen vertices $x_n,y_n,u_n,v_n$ are replaced by the starting points of uniformly chosen oriented edges $\vec{e}_1, \dots, \vec{e}_4$. The proof is exactly the same, the only difference is that in the end, instead of counting vertices, we need to notice that $B_{R_n-1}(x_n)$ and $\bT \backslash B_{R_n+M}(x_n)$ contain at most $6\eta n$ edges each. We will write $d_{\bT}(\vec{e}_1,\vec{e}_2)$ for the distance in $\bT$ between the starting points of $\vec{e}_1$ and $\vec{e}_2$.

    For any two oriented edges $\vec{e}_1, \vec{e}_2$ of $\bT$, we have
    \begin{equation}\label{eqn:distance_decomposition}
    d_{\bT} \left( \vec{e}_1, \vec{e}_2  \right) = H(\vec{e_1})+H(\vec{e}_2)+d_{\bC} \left( p(\vec{e}_1), p(\vec{e}_2) \right)+\eps_{\vec{e}_1,\vec{e}_2},
    \end{equation}
    where $p(\vec{e})$ is the base of the tentacle that $\vec{e}$ belongs to, $H(\vec{e})$ is the graph distance between $\vec{e}$ and $p(\vec{e})$, and $\eps_{\vec{e}_1,\vec{e}_2} \in [-2,2]$. Now let $\vec{e}_*$ be an oriented edge of $\bT$ which maximizes $H(\vec{e})$ (if there are several maximizers, pick one uniformly at random), so that $H(\vec{e}_*) \geq \frac{s_{\theta}}{2} \log n$ w.h.p. by Lemma~\ref{lemma:largeladder}. Let also $\vec{e}_0, \vec{e}_1, \vec{e}_2$ be three independent uniformly distributed oriented edges of $\bT$. With high probability, we have
    \[|d_{\bT}(\vec{e}_0,\vec{e}_1)-d_{\bT}(\vec{e}_0,\vec{e}_2)| \leq \log \log n\]
    by Theorem~\ref{thm_dist}, and $H(\vec{e}_i) \leq \log \log n$ for $i \in \{0,1,2\}$ by Proposition~\ref{prop_isolated_points} (if not $\vec{e}_i$ would be $\frac{2}{\log \log n}$-isolated). Using~\eqref{eqn:distance_decomposition} for $(\vec{e}_0,\vec{e}_1)$ and for $(\vec{e}_0,\vec{e}_2)$, it follows that
    \begin{equation}\label{eqn:concentration_distances_core}
    \left| d_{\bC} \left( p(\vec{e}_0), p(\vec{e}_1) \right)-d_{\bC} \left( p(\vec{e}_0), p(\vec{e}_2) \right) \right| \leq 2 \log \log n+2.
    \end{equation}
    On the other hand, conditionally on $\bC$ and the multiset of tentacles, the tentacles are glued uniformly at random on the edges of $\bC$. It follows that up to an event of probability $o(1)$ (namely that two of $\vec{e}_0, \vec{e}_1, \vec{e}_2, \vec{e}_*$ belong to the same tentacle, which is unlikely by Theorem~\ref{thm_isoperimetric}), the edges $p(\vec{e}_0), p(\vec{e}_1), p(\vec{e}_2), p(\vec{e}_*)$ are independent uniform edges of $\bC$. In particular $\left( p(\vec{e}_0), p(\vec{e}_1), p(\vec{e}_2) \right)$ and $\left( p(\vec{e}_0), p(\vec{e}_1), p(\vec{e}_*) \right)$ have the same law conditionally on $\bC$, so we can replace $\vec{e}_2$ by $\vec{e}_*$ in~\eqref{eqn:concentration_distances_core}. Combined with~\eqref{eqn:distance_decomposition} for $(\vec{e}_0, \vec{e}_1)$ and for $(\vec{e}_0, \vec{e}_*)$, this proves
    \[ \diam(\bT) \geq d_{\bT}(\vec{e}_0, \vec{e}_*) \geq d_{\bT}(\vec{e}_0, \vec{e}_1) + \frac{s_{\theta}}{2} \log n -3 \log \log n - 8.\]
\end{proof}

\subsection*{Acknowledgements.} The second author acknowledges funding from the grants ANR-19-CE48-0011 “COMBIN\'E” and ANR-18-CE40-0033 “Dimers”. 
Some ideas of this paper were developped during ongoing work of the third author with Andrew Elvey-Price, Wenjie Fang and Michael Wallner. In particular, the authors thank Andrew Elvey-Price for the proof of Lemma~\ref{lem:concave}.

\bibliographystyle{abbrv}
\bibliography{bibli}

\end{document}